\newtheorem{thm}{Theorem}
\numberwithin{thm}{section}
\newtheorem{prop}[thm]{Proposition}
\newtheorem{cor}[thm]{Corollary}
\newtheorem{exam}[thm]{Example}
\newtheorem{rema}[thm]{Remark}
\newtheorem{defi}[thm]{Definition}
\newtheorem*{thm2}{Theorem}
\newtheorem*{cor2}{Corollary}
\newtheorem*{prop2}{Proposition}
\newtheorem*{que2}{Question}
\begin{document}
\begin{center}
\huge{Non-existence of full exceptional collections on twisted flags and categorical representability via noncommutative motives}\\[1cm]
\end{center}

\begin{center}

\large{Sa$\mathrm{\check{s}}$a Novakovi$\mathrm{\acute{c}}$}\\[0,4cm]
%\small{\emph{To B. with love}}\\[0,6cm]
{\small December 2019}\\[0,3cm]
\end{center}

\noindent{\small \textbf{Abstract}. 
In this paper we prove that a finite product of Brauer--Severi varieties is categorical representable in dimension zero if and only if it admits a $k$-rational point if and only if it is rational over $k$. The same is true for certain isotropic involution varieties over a field $k$ of characteristic different from two. 
For finite products of generalized Brauer--Severi varieties, categorical representability in dimension zero is equivalent to the existence of a full exceptional collection. In this case however categorical representability in dimension zero is not equivalent to the existence of a rational point. We also show that non-trivial twisted flags of classical type $A_n$ and $C_n$ cannot have full exceptional collections, enlarging in this way the set of previous known examples. Finally, we determine the categorical representability dimension $\mathrm{rdim}(X)$ for generalized Brauer--Severi varieties of index $\leq 3$ and for certain twisted forms of smooth quadrics (involution varieties).\\

%\begin{center}
%\tableofcontents
%\end{center}
\section{Introduction}
Since the early works by Beilinson, Bondal and Kapranov on exceptional collections in geometric categories it has been conjectured that projective homogeneous spaces over algebraically closed fields of characteristic zero should have full exceptional collections. The best approximation at the moment is the paper of Kuznetsov and Polishchuk \cite{KU1S} in which a uniform construction of exceptional collections has been proposed. However, it still remains to show that these collections are full. Quite recently, it has been proved that some of the exceptionel collection from \cite{KU1S} are indeed full \cite{ANS}. But there is an interesting problem appearing if the base field of the homogeneous spaces is not algebraically closed. Homogeneous spaces can be defined over $\mathbb{Z}$ and it is therefore natural to ask whether there exist full exceptional collections in this general case. It is also natural to consider twisted forms of homogeneous spaces and to study whether or not they admit full exceptional collections. And indeed, Raedschelders \cite{RAES} proved recently that non-split Brauer--Severi varieties do not have full \'etale exceptional collections and provided in this way a positive answer to the conjecture that non-split Brauer--Severi varieties do not admit full exceptional collections (see \cite{NO2S}).

The aim of the present paper is to discuss the existence of (full) exceptional collections on certain twisted flags ${_\gamma}X$ and its relation to the existence of a rational point, respectively the rationality of the given variety ${_\gamma}X$. All the proofs of the results concerning the (non)-existence of exceptional collections make use of noncommutative motives and are consequences of the main results in \cite{TAS} and the idea from the note \cite{RAES}. Therefore, the results for Brauer--Severi varieties are somehow known as they naturally extend \cite{RAES}, but, to our best knowledge, stated nowhere. Nonetheless, we want to give the proofs, adding to the literature. The results for twisted quadrics however are new (see Theorems 5.6 and 5.7 and Corollary 5.5). The interesting observation of the present paper however is, that the (non)-existence of full exceptional collections for the varieties under consideration is related to the existence of $k$-rational points and to the rationality of the given variety. So the main results in this context will be Theorem 6.3, Corollaries 6.9 and 6.11 and Propositions 6.12 and 6.13 below.   
\begin{thm2}[Proposition 5.1, Proposition 5.3]
	Let $X=X_1\times\cdots\times X_n$ be a finite product, where all the factors are either Brauer--Severi varieties over an arbitrary field or generalized Brauer--Severi varieties over a field of characteristic zero. Then $X$ admits a full exceptional collection if and only if all the $X_i$ are split, i.e, if all $X_i$ are either projective spaces or Grassmannians.
\end{thm2} 
\begin{thm2}[Corollary 5.5, Theorem 5.7]
	Let $X=X_1\times\cdots\times X_n$ be a finite product of twisted forms of smooth quadrics over $k$ ($\mathrm{char}(k)\neq 2$) where all factors $X_i$ are either associated with involution algebras $(A_i,\sigma_i)$ of orthogonal type having trivial discriminants $\delta(A_i,\sigma_i)$ or with involution algebras $(A_i,\sigma_i)$ of orthogonal type over $\mathbb{R}$. Then $X$ admits a full exceptional collection if and only if all the $X_i$ are split, i.e, if all $X_i$ are smooth projective quadrics.
\end{thm2}
Using noncommutative motives again and exploiting general facts from \cite{PAS} and \cite{MPWS} we show that non-trivial twisted forms of homogeneous spaces of type $A_n$ and $C_n$ can not have full exceptional collections.
\begin{cor2}[Corollary 5.10]
	Let $G_i$ be a split simply connected simple algebraic group of classical type $A_n$ or $C_n$ ($n\neq 4$) over $k$ and $\gamma_i\colon \mathrm{Gal}(k^s|k)\rightarrow G_i(k^s)$, $1\leq i\leq n$, 1-cocycles. Given parabolic subgroups $P_i\subset G_i$, let ${_{\gamma_i}}(G_i/P_i)$ be the twisted forms of $G_i/P_i$. If $X={_{\gamma_1}}(G_1/P_1)\times...\times {_{\gamma_n}}(G_n/P_n)$ admits a full exceptional collection, then all 1-cocycles $\gamma_i$ must be trivial.  In other words, non-trivial twisted flags of the considered type do not have full exceptional collections.
\end{cor2}

Closely related to the problem of the existence of full exceptional collections is the question whether the considered variety admits a $k$-rational point. A potential measure for rationality was introduced by Bernardara and Bolognesi \cite{BBS} with the notion of categorical representability. We use the definition given in \cite{AB1S}. A $k$-linear triangulated category $\mathcal{T}$ is said to be \emph{representable in dimension $m$} if there is a semiorthogonal decomposition (see Section 3 for the definition) $\mathcal{T}=\langle \mathcal{A}_1,...,\mathcal{A}_n\rangle$ and for each $i=1,...,n$ there exists a smooth projective connected variety $Y_i$ with $\mathrm{dim}(Y_i)\leq m$, such that $\mathcal{A}_i$ is equivalent to an admissible subcategory of $D^b(Y_i)$. We use the following notations
\begin{eqnarray*}
	\mathrm{rdim}(\mathcal{T}):=\mathrm{min}\{m\mid \mathcal{T}\  \textnormal{is representable in dimension m}\},
\end{eqnarray*}
whenever such a finite $m$ exists. Let $X$ be a smooth projective $k$-variety. One says $X$ is \emph{representable in dimension} $m$ if $D^b(X)$ is representable in dimension $m$. We will use the following notation:
\begin{eqnarray*}
	\mathrm{rdim}(X):=\mathrm{rdim}(D^b(X)),\thickspace \mathrm{rcodim}(X):= \mathrm{dim}(X)-\mathrm{rdim}(X).
\end{eqnarray*}
Note that when the base field $k$ of a variety $X$ is not algebraically closed, the existence of $k$-rational points on $X$ is a major open question in arithmetic geometry. We recall the following question that was asked by H. Esnault and stated in \cite{ABS}.
\begin{que2}[H. Esnault]
	Let $X$ be a smooth projective variety over $k$. Can the bounded derived category $D^b(X)$ detect the existence of a $k$-rational point?
\end{que2}

\begin{thm2}[Theorem 6.3]
	If $X$ is a finite product of Brauer--Severi varieties, then the following are equivalent:
	\begin{itemize}
		\item[(\textbf{i})] $\mathrm{rdim}(X)=0$.
		\item[(\textbf{ii})] $X$ admits a full exceptional collection.
		\item[(\textbf{iii})] $X$ is rational over $k$.
		\item[(\textbf{iv})] $X$ admits a $k$-rational point.
	\end{itemize} 
\end{thm2}

Note that this theorem is a generalization of Proposition 6.1 in \cite{ABS} and that it is proved in a completely different way than the result in \cite{ABS}. For the product of generalized Brauer--Severi varieties we will show:
\begin{thm2}[Theorem 6.5, Corollary 6.6]
	Let $X$ be a finite product of generalized Brauer--Severi varieties over a field $k$ of characteristic zero. Then $\mathrm{rdim}(X)=0$ if and only if $X$ splits as the finite product of Grassmannians over $k$.
\end{thm2}
We want to point out that Theorem 6.5 in not equivalent to the statement that $X$ admits a $k$-rational point (see Remark 6.7 for an explanation).
Moreover, for the finite product of certain twisted forms of smooth quadrics we show the following:
\begin{thm2}[Corollary 5.5, Theorem 5.7]
	Let $X=X_1\times\cdots\times X_n$ be a finite product of twisted forms of smooth quadrics over $k$ where all factors $X_i$ are either associated with involution algebras $(A_i,\sigma_i)$ of orthogonal type over a field $k$ ($\mathrm{char}(k)\neq 2$) with trivial discriminants $\delta(A_i,\sigma_i)$ or with involution algebras $(A_i,\sigma_i)$ of orthogonal type over $\mathbb{R}$. Then $\mathrm{rdim}(X)=0$ if and only if $X$ splits as the product of smooth quadrics, i.e. if all $(A_i,\sigma_i)$ split. In particular, $X$ admits a full exceptional collection if and only if $X$ splits as the product of smooth quadrics
\end{thm2}
If all the factors in the product of twisted quadrics are isotropic, we find:
\begin{thm2}[Corollary 6.9, Corollary 6.11]
	Let $X=X_1\times\cdots\times X_n$ be a finite product of twisted forms of smooth quadrics over $k$ where all factors $X_i$ are either associated with isotropic involution algebras $(A_i,\sigma_i)$ of orthogonal type over a field $k$ ($\mathrm{char}(k)\neq 2$) with trivial discriminants $\delta(A_i,\sigma_i)$ or with isotropic involution algebras $(A_i,\sigma_i)$ of orthogonal type over $\mathbb{R}$. Then the following are equivalent:
	\begin{itemize}
		\item[(\textbf{i})]	$\mathrm{rdim}(X)=0$.
		\item[(\textbf{ii})] $X$ admits a full exceptional collection.
		\item[(\textbf{iii})] $X$ is rational over $k$.
		\item[(\textbf{iv})] $X$ admits a $k$-rational point.
	\end{itemize}
\end{thm2}
The equivalence of (ii) and (iii) (resp. (iv)) however, is not true in general. Indeed there exists a smooth anisotropic quadric without rational point that admits a full exceptional collection (see \cite{BTS}, proof of Proposition 1.7). Note that a smooth anisotropic quadric is rational if and only if it admits a rational point (see \cite{COS}, Theorem 1.1).

We want to stress that the existence of rational points seems, in general, not to be related to categorical representability in dimension zero. For instance, an elliptic curve over a number field is categorical representable in dimension one (see \cite{AB1S}) although it has rational points. Indeed, the rationality of a given variety $X$ seems to be related to categorical representability in codimension 2. We recall the following question which was formulated in \cite{MBTS}:
\begin{que2}
	Let $X$ be a smooth projective variety over $k$ of dimension at least 2. Suppose $X$ is $k$-rational. Do we have $\mathrm{rcodim}(X)\geq 2$ ?
\end{que2} 
There are several results suggesting that this question has a positive answer, see \cite{MBTS}, \cite{AB1S} and references therein. In this context, Theorem 6.3 and Corollaries 6.9 and 6.11 from above have the following consequence:
\begin{prop2}[Proposition 6.12]
	Let $X=X_1\times\cdots \times X_n$ be a finite product over $k$ ($\mathrm{char}(k)\neq 2$) of dimension at least two. Assume $X_i$ is either a Brauer--Severi variety or a twisted form of a quadric associated with an isotropic involution algebra of orthogonal type with trivial discriminant. If $X$ is $k$-rational, then $\mathrm{rcodim}(X)\geq 2$.
\end{prop2}
\begin{prop2}[Proposition 6.13]
	Let $X=X_1\times\cdots \times X_n$ be a finite product over $\mathbb{R}$ of dimension at least two. Assume $X_i$ is either a Brauer--Severi variety or a twisted form of a quadric associated with an isotropic involution algebra of orthogonal type. If $X$ is $k$-rational, then $\mathrm{rcodim}(X)\geq 2$.
\end{prop2}
In the special case where $X$ is a Brauer--Severi variety or a twisted form of a smooth quadric associated to an isotropic central simple algebra with involution of orthogonal type, $k$-rationality is equivalent to the existence of a $k$-rational point. So it is clear that Theorem 6.3 and Corollaries 6.9 and 6.11 reflect a very special behavior for the varieties under examination. 

The number $\mathrm{rdim}$ is closely related to the \emph{motivic categorical dimension} which is introduced in \cite{MBS} for arbitrary fields. In $\mathrm{char}(k)=0$, motivic categorical dimension of a smooth projective variety $X$ can be defined as the smallest $d$ such that $\mu([X])$ lies in $PT_d(k)$. Here $PT(k)$ denotes the Grothendieck ring of dg categories (see \cite{BLLS}), $PT_d(k)\subset PT(k)$ the additive subgroup generated by the smallest saturated monoid containing classes of pretriangulated dg categories of categorical dimension at least $d$ and $\mu\colon K_0(Var(k))\rightarrow PT(k)$ the motivic measure defined by Bondal, Larsen and Lunts in \cite{BLLS} (in $\mathrm{char}(k)=0$). If one denotes the motivic categorical dimension of $D^b(X)$ by $\mathrm{mcd}(X)$, one has $\mathrm{mcd}(X)\leq \mathrm{rdim}(X)$ (see \cite{MBS}). And a natural problem is then to calculate the numbers $\mathrm{mcd}(X)$  and $\mathrm{rdim}(X)$ and find conditions for which $X$ equality $\mathrm{mcd}(X)=\mathrm{rdim}(X)$ holds. And indeed, results from \cite{MBS} suggest that motivic categorical dimension can be used to define a birational invariant. It turns out that determining $\mathrm{rdim}(X)$ for $X$ a non-split (generalized) Brauer--Severi or an involution variety is indeed a challenging problem. We determine $\mathrm{rdim}(X)$ in the case of generalized Brauer--Severi varieties of index $\leq 3$ and generalize in this way an earlier result \cite{NO3S}, Theorem 1.4.
\begin{thm2}[Theorem 6.15]
	Let $X=\mathrm{BS}(r,A)$ be a generalized Brauer--Severi variety over a field $k$ of characteristic zero with $\mathrm{ind}(A)\leq 3$. Then $\mathrm{rdim}(X)=\mathrm{ind}(A)-1$. In particular $\mathrm{mcd}(X)\leq \mathrm{ind}(A)-1$. 	
\end{thm2}
\noindent
For involution varieties we obtain:
\begin{thm2}[Theorem 6.16]
	Let $X$ be one of the following varieties:
	\begin{itemize}
		\item[(\textbf{i})] a twisted quadric associated to a central simple algebra $(A,\sigma)$ with involution of orthogonal type having trivial discriminant $\delta(A,\sigma)$. 
		\item[(\textbf{ii})] a twisted quadric associated to a central simple $\mathbb{R}$-algebra $(A,\sigma)$ with involution of orthogonal type.
	\end{itemize} 
	Then $\mathrm{rdim}(X)\leq \mathrm{ind}(A)-1$. Moreover, if $\mathrm{ind}(A)\leq 3$, then $\mathrm{rdim}(X)= \mathrm{ind}(A)-1$. In particular, $\mathrm{mcd}(X)\leq \mathrm{ind}(A)-1$.
\end{thm2}

{\small \textbf{Acknowledgement}. Nearly all the proofs make use of noncommutative motives and I thank Theo Raedschelders for his explanations in this respect and notes on literature. I also like to thank Marcello Bernardara for useful comments. %Then I am grateful to the referee for careful reading and his helpful comments which helped to improve the paper. 
I would like to thank the Heinrich Heine University for financial support via the SFF-grant.}\\

{\small \textbf{Conventions}. Throughout this work $k$ denotes an arbitrary ground field and $k^s$ and $\bar{k}$ a separable respectively algebraic closure.} %Furthermore, any locally free sheaf is assumed to be of finite rank and will be called vector bundle.

\section{Examples of twisted forms of homogeneous varieties}

As references we use \cite{MPWS}, \S 1 and \cite{KNUS}, Chapter VI. By an algebraic group over the field $k$ we will always mean an \emph{affine} algebraic group. Let $G$ be an algebraic group over $k$ and $X$ an algebraic variety such that $G$ acts on $X$ over $k$. The variety $X$ is then called \emph{$G$-variety}. For a closed (and reduced) subgroup $H$ of $G$ one has the associated \emph{homogeneous $G$-variety} $Y=G/H$, which is also called a \emph{homogeneous space}. Now let $G$ be a semi-simple (so connected) algebraic group. A projective $G$-variety $X$ is called \emph{twisted flag} if $X\otimes_k k^s\simeq (G\otimes_k k^s)/P$ where $P$ is a closed subgroup of $G\otimes_k k^s$. As $(G\otimes_k k^s)/P$ is projective, $P$ must be a parabolic subgroup. Any twisted flag is a smooth, absolutely irreducible and reduced variety. 

It is well-known that the twisted forms of a homogeneous space $G/P$ are in one-to-one correspondence with elements in $H^1(k,\mathrm{Aut}(G\otimes_k k^s)/P))$. Let $\bar{G}$ denote the adjoint group $G/Z(G)$. The $\mathrm{Gal}(k^s|k)$-group homomorphism $\bar{G}(k^s)\rightarrow \mathrm{Aut}((G\otimes_k k^s)/P)$ induces a map of pointed sets
\begin{eqnarray*}
	\alpha\colon H^1(k,\bar{G}(k^s))\longrightarrow H^1(k,\mathrm{Aut}((G\otimes_k k^s)/P)).
\end{eqnarray*}
For a twisted form we write ${_\gamma}X$ for ${_{\alpha(\gamma)}}X$ where $X=G/P$.
%Let $G$ be a semi-simple (linear) algebraic group over a field $k$ and $P\subset G$ a parabolic subgroup. It is well known that out of this data one can construct a projective homogeneous space $G/P$.  An algebraic group $G'$ is called \emph{twisted form} of $G$ if $G'\otimes_k k^s\simeq G\otimes_k k^s$. The set of isomorphism classes of twisted forms of $G$ is in one-to-one correspondence with $H^1(k,\mathrm{Aut}(G\otimes_k k^s))$. Denote by $\bar{G}$ the adjoint group, then the natural homomorphism $\bar{G}(k^s)\rightarrow \mathrm{Aut}(G\otimes_k k^s)$ induces a map
%\begin{eqnarray*}
%\alpha\colon H^1(k,\bar{G}(k^s))\longrightarrow H^1(k,\mathrm{Aut}(G\otimes_k k^s)).
%\end{eqnarray*}
%A twisted form $G'$ of $G$ is called an \emph{inner form} if the cocycle corresponding to $G'$ belongs to the image of $\alpha$.
So in the present work we will always take some 1-cocycle $\gamma\colon \mathrm{Gal}(k^s|k)\rightarrow G(k^s)$, the projective homogeneous space $X=G/P$ and its twisted form ${_\gamma}X$.  Note that ${_\gamma}X\otimes_k k^s\simeq (G/P)\otimes_k k^s$. Furthermore, let $\widetilde{G}$ and $\widetilde{P}$ be the universal covers of $G$ and $P$ respectively. Denote by $R(\widetilde{G})$ and $R(\widetilde{P})$ the associated representation rings and by $\widetilde{Z}\subset \widetilde{G}$ the center of $\widetilde{G}$. Finally, let $\mathrm{Ch}:=\mathrm{Hom}(\widetilde{Z},\mathbb{G}_m)$ be the character group. Under these notations we can give some examples of twisted forms of homogeneous spaces which will occur quite frequently throughout this work.

\begin{exam}
	\textnormal{Let $G=\mathrm{PGL}_n$. In this case we have $\widetilde{G}=\mathrm{SL}_n$ and $\widetilde{Z}\simeq \mu_n$. Then $\mathrm{Ch}\simeq \mathbb{Z}/n\mathbb{Z}$. Let $a\in k^{\times}$ and $c\in \mathrm{GL}_{n-1}$ and consider the following parabolic subgroup
		\begin{eqnarray*}
			\widetilde{P}=\{ \begin{pmatrix} 
				a&b\\
				0&c
			\end{pmatrix} \big\vert a\cdot\mathrm{det}(c)=1\}\subset \mathrm{SL}_n.
		\end{eqnarray*} The associated projective homogeneous variety is $\widetilde{G}/\widetilde{P}\simeq G/P\simeq \mathbb{P}^{n-1}_k$. Let $\gamma\colon \mathrm{Gal}(k^s|k)\rightarrow \mathrm{PGL}_n(k^s)$ be a 1-cocycle, then the twisted form ${_\gamma}\mathbb{P}^{n-1}$ is called \emph{Brauer--Severi variety}.}
\end{exam}
\begin{exam}
	\textnormal{Let $G=\mathrm{PGL}_n$ as in Example 2.1. We fix a number $1\leq d\leq n-1$ and let $a\in \mathrm{GL}_d$ and $c\in \mathrm{GL}_{n-d}$. Consider the parabolic subgroup 
		\begin{eqnarray*}
			\widetilde{P}=\{ \begin{pmatrix} 
				a&b\\
				0&c
			\end{pmatrix} \big\vert a\cdot\mathrm{det}(c)=1\}\subset \mathrm{SL}_n.
		\end{eqnarray*} The associated projective homogeneous variety is $\widetilde{G}/\widetilde{P}\simeq G/P\simeq \mathrm{Grass}_k(d,n)$. Given a 1-cocycle $\gamma\colon\mathrm{Gal}(k^s|k)\rightarrow \mathrm{PGL}_n(k^s)$, the twisted form ${_\gamma}\mathrm{Grass}_k(d,n)$ is called \emph{generalized Brauer--Severi variety}.}
\end{exam}
\begin{exam}
	\textnormal{Let $G=\mathrm{PSO}_n$ with $n$ even. In this case $\widetilde{G}= \mathrm{Spin}_n$. Consider the action of $G$ on $\mathbb{P}^{n-1}$ given by projective linear transformations. We write $P\subset G$ for the stabilizer of the point $[1:0:\cdots :0]$. The projective homogeneous variety $G/P$ is a smooth quadric hypersurface $Q\subset \mathbb{P}^{n-1}$. Given a 1-cocycle $\gamma\colon \mathrm{Gal}(k^s|k)\rightarrow \mathrm{PSO}_n(k^s)$, it is well known that $\gamma$ determines a central simple $k$-algebra $A$ with an involution $\sigma$ of orthogonal type. The associated twisted homogeneous space ${_\gamma}(G/P)$ is a twisted form of the quadric $G/P$.}
\end{exam}
In all examples from above the twisted forms can be described in terms of central simple $k$-algebras.
Recall that a finite-dimensional $k$-algebra $A$ is called \emph{central simple} if it is an associative $k$-algebra that has no two-sided ideals other than $0$ and $A$ and if its center equals $k$. If the algebra $A$ is a division algebra it is called \emph{central division algebra}. Note that $A$ is a central simple $k$-algebra if and only if there is a finite field extension $k\subset L$, such that $A\otimes_k L \simeq M_n(L)$. This is also equivalent to $A\otimes_k \bar{k}\simeq M_n(\bar{k})$. An extension $k\subset L$ such that $A\otimes_k L\simeq M_n(L)$ is called splitting field for $A$. 

The \emph{degree} of a central simple algebra $A$ is defined to be $\mathrm{deg}(A):=\sqrt{\mathrm{dim}_k A}$. According to the \emph{Wedderburn Theorem}, for any central simple $k$-algebra $A$ there is an unique integer $n>0$ and a division $k$-algebra $D$ such that $A\simeq M_n(D)$. The division algebra $D$ is also central and unique up to isomorphism. The degree of the unique central division algebra $D$ is called the \emph{index} of $A$ and is denoted by $\mathrm{ind}(A)$. Two central simple algebras $A$ and $B$ are said to be \emph{Brauer-equivalent} if there are positive integers $r,s$ such that $M_r(A)\simeq M_s(B)$. 

Note that a Brauer--Severi variety of dimension $n$ can also be defined as a scheme $X$ of finite type over $k$ such that $X\otimes_k L\simeq \mathbb{P}^n$ for a finite field extension $k\subset L$. A field extension $k\subset L$ for which $X\otimes_k L\simeq \mathbb{P}^n$ is called \emph{splitting field} of $X$. Clearly, $k^s$ and $\bar{k}$ are splitting fields for any Brauer--Severi variety. In fact, every Brauer--Severi variety always splits over a finite Galois extension. It follows from descent theory that $X$ is projective, integral and smooth over $k$. Via non-commutative Galois cohomology, Brauer--Severi varieties of dimension $n$ are in one-to-one correspondence with central simple algebras $A$ of degree $n+1$. For details and proofs on all mentioned facts we refer to \cite{ARS} and \cite{GSS}.

To a central simple $k$-algebra $A$ one can also associate twisted forms of Grassmannians. Let $A$ be of degree $n$ and $1\leq d\leq n$. Consider the subset of $\mathrm{Grass}_k(d\cdot n, A)$ consisting of those subspaces of $A$ that are left ideals $I$ of dimension $d\cdot n$. This subset can be given the structure of a projective variety which turns out to be a generalized Brauer--Severi variety. It is denoted by $\mathrm{BS}(d,A)$. After base change to some splitting field $L$ of $A$ the variety $\mathrm{BS}(d,A)$ becomes isomorphic to $\mathrm{Grass}_L(d,n)$. If $d=1$ the generalized Brauer--Severi variety is the Brauer--Severi variety associated to $A$. Note that $\mathrm{BS}(d,A)$ is a Fano variety. For details see \cite{BLS}. 

To a central simple algebra $A$ of degree $n$ with involution $\sigma$ of the first kind over a field $k$ of $\mathrm{char}(k)\neq 2$ one can associate the \emph{involution variety} $\mathrm{IV}(A,\sigma)$. This variety can be described as the variety of $n$-dimensional right ideals $I$ of $A$ such that $\sigma(I)\cdot I=0$. If $A$ is split so $(A,\sigma)\simeq (M_n(k), q^*)$, where $q^*$ is the adjoint involution defined by a quadratic form $q$ one has $\mathrm{IV}(A,\sigma)\simeq V(q)\subset \mathbb{P}^{n-1}_k$. Here $V(q)$ is the quadric determined by $q$. By construction such an involution variety  $\mathrm{IV}(A,\sigma)$ becomes a quadric in $\mathbb{P}^{n-1}_L$ after base change to some splitting field $L$ of $A$. In this way the involution variety is a twisted form of a smooth quadric in the sense of Example 2.3. Recall from \cite{DTS} that a splitting field $L$ splits $A$ \emph{isotropically} if $(A,\sigma)\otimes_k L\simeq (M_n(L), q^*)$ with $q$ an isotropic quadratic form over $L$. Although the degree of $A$ is arbitrary, (when $\mathrm{char}(k)\neq 2$), the case where degree of $A$ is odd does not give anything new, since central simple algebras of odd degree with involution of the first kind are split (see \cite{KNUS}, Corollary 2.8). For details on the construction and further properties on involution varieties and the corresponding algebras we refer to \cite{DTS}. 

In fact, all the twisted flags from Examples 2.1, 2.2 and 2.3 appear as Brauer--Severi varieties, generalized Brauer--Severi varieties or as twisted smooth quadrics and are associated to some central simple algebra $A$ in the sense described above.

\section{Exceptional collections and semiorthogonal decompositions} 
Let $\mathcal{D}$ be a triangulated category and $\mathcal{C}$ a triangulated subcategory. The subcategory $\mathcal{C}$ is called \emph{thick} if it is closed under isomorphisms and direct summands. For a subset $A$ of objects of $\mathcal{D}$ we denote by $\langle A\rangle$ the smallest full thick subcategory of $\mathcal{D}$ containing the elements of $A$. 
%Furthermore, we define $A^{\perp}$ to be the subcategory of $\mathcal{D}$ consisting of all objects $M$ such that $\mathrm{Hom}_{\mathcal{D}}(E[i],M)=0$ for all $i\in \mathbb{Z}$ and all elements $E$ of $A$. We say that $A$ \emph{generates} $\mathcal{D}$ if $A^{\perp}=0$. Now assume $\mathcal{D}$ admits arbitrary direct sums. An object $B$ is called \emph{compact} if $\mathrm{Hom}_{\mathcal{D}}(B,-)$ commutes with direct sums. Denoting by $\mathcal{D}^c$ the subcategory of compact objects we say that $\mathcal{D}$ is \emph{compactly generated} if the objects of $\mathcal{D}^c$ generate $\mathcal{D}$. One has the following important theorem (see \cite{BV}, Theorem 2.1.2).
%\begin{thm}
%Let $\mathcal{D}$ be a compactly generated triangulated category. Then a set of objects $A\subset \mathcal{D}^c$ generates $\mathcal{D}$ if and only if $\langle A\rangle=\mathcal{D}^c$.  
%\end{thm}
For a smooth projective variety $X$ over $k$, we denote by $D^b(X)$ the bounded derived category of coherent sheaves on $X$. Moreover, if $B$ is an associated $k$-algebra, we write $D^b(B)$ for the bounded derived category of finitely generated left $B$-modules.

%$D(\mathrm{Qcoh}(X))$ the derived category of quasicoherent sheaves on $X$. The bounded derived category of coherent sheaves is denoted by $D^b(X)$. %Note that $D(\mathrm{Qcoh}(X))$ is compactly generated with compact objects being all of $D^b(X)$. For details on generating see \cite{BV}.
\begin{defi}
	\textnormal{Let $A$ be a division algebra over $k$, not necessarily central. An object $\mathcal{E}^{\bullet}\in D^b(X)$ is called \emph{$A$-exceptional} if $\mathrm{End}(\mathcal{E}^{\bullet})=A$ and $\mathrm{Hom}(\mathcal{E}^{\bullet},\mathcal{E}^{\bullet}[r])=0$ for $r\neq 0$. By \emph{generalized exceptional object}, we mean $A$-exceptional for some division algebra $A$ over $k$. %The $w$ in w-exceptional stands for weak (see \cite{OR}). 
		If $A=k$, the object $\mathcal{E}^{\bullet}$ is called \emph{exceptional}.} 
\end{defi}
\begin{defi}
	\textnormal{A totally ordered set $\{\mathcal{E}^{\bullet}_1,...,\mathcal{E}^{\bullet}_n\}$ of generalized exceptional objects on $X$ is called an \emph{generalized exceptional collection} if $\mathrm{Hom}(\mathcal{E}^{\bullet}_i,\mathcal{E}^{\bullet}_j[r])=0$ for all integers $r$ whenever $i>j$. A generalized exceptional collection is \emph{full} if $\langle\{\mathcal{E}^{\bullet}_1,...,\mathcal{E}^{\bullet}_n\}\rangle=D^b(X)$ and \emph{strong} if $\mathrm{Hom}(\mathcal{E}^{\bullet}_i,\mathcal{E}^{\bullet}_j[r])=0$ whenever $r\neq 0$. If the set $\{\mathcal{E}^{\bullet}_1,...,\mathcal{E}^{\bullet}_n\}$ consists of exceptional objects it is called \emph{exceptional collection}.}
\end{defi}
%Notice that the direct sum of objects forming a full strong w-exceptional (resp. separable-exceptional) collection is a tilting object in the sense of Definition 3.2. 
\begin{rema}
	\textnormal{If the ring $A$ in Definition 3.1 is required to be a semisimple algebra, the object is also called \emph{semi-exceptional object} in the literature (see \cite{ORS}). Consequently, one can also define (full) semi-exceptional collections. We also want to mention that a general exceptional object is also called weak exceptional object in the literature (see \cite{ORS}, Definition 1.16)}
\end{rema}
\begin{exam}
	\textnormal{Let $\mathbb{P}^n$ be the projective space and consider the ordered collection of invertible sheaves $\{\mathcal{O}_{\mathbb{P}^n}, \mathcal{O}_{\mathbb{P}^n}(1),...,\mathcal{O}_{\mathbb{P}^n}(n)\}$. In \cite{BES} Beilinson showed that this is a full strong exceptional collection.}
\end{exam}
\begin{exam}
	\textnormal{Let $X=\mathbb{P}^1\times\mathbb{P}^1$. Then $\{\mathcal{O}_X,\mathcal{O}_X(1,0), \mathcal{O}_X(0,1), \mathcal{O}_X(1,1)\}$ is a full strong exceptional collection on $X$. We write $\mathcal{O}_X(i,j)$ for $\mathcal{O}(i)\boxtimes\mathcal{O}(j)$.}
\end{exam}
The notion of a full exceptional collection is a special case of what is called a semiorthogonal decomposition of $D^b(X)$. Recall that a full thick triangulated subcategory $\mathcal{D}$ of $D^b(X)$ is called \emph{admissible} if the inclusion $\mathcal{D}\hookrightarrow D^b(X)$ has a left and right adjoint functor. 
\begin{defi}
	\textnormal{Let $X$ be a smooth projective variety over $k$. A sequence $\mathcal{D}_1,...,\mathcal{D}_n$ of full admissible triangulated subcategories of $D^b(X)$ is called \emph{semiorthogonal} if $\mathcal{D}_j\subset \mathcal{D}_i^{\perp}=\{\mathcal{F}^{\bullet}\in D^b(X)\mid \mathrm{Hom}(\mathcal{G}^{\bullet},\mathcal{F}^{\bullet})=0$, $\forall$ $ \mathcal{G}^{\bullet}\in\mathcal{D}_i\}$ for $i>j$. Such a sequence defines a \emph{semiorthogonal decomposition} of $D^b(X)$ if the smallest thick full subcategory containing all $\mathcal{D}_i$ equals $D^b(X)$.}
\end{defi}
\noindent
For a semiorthogonal decomposition we write $D^b(X)=\langle \mathcal{D}_1,...,\mathcal{D}_n\rangle$.
\begin{rema}
	\textnormal{Let $\mathcal{E}^{\bullet}_1,...,\mathcal{E}^{\bullet}_n$ be a full generalized exceptional collection on $X$. It is easy to verify that by setting $\mathcal{D}_i=\langle\mathcal{E}^{\bullet}_i\rangle$ one gets a semiorthogonal decomposition $D^b(X)=\langle \mathcal{D}_1,...,\mathcal{D}_n\rangle$.}
\end{rema}
\noindent
For a wonderful and comprehensive overview of the theory on semiorthogonal decompositions and its relevance in algebraic geometry we refer to \cite{KUS}.

\section{Recollections on noncommutative motives}
We refer to the book \cite{GTAS} (alternatively see \cite{TTS} and \cite{MTS} for a survey on noncommutative motives). Let $\mathcal{A}$ be a small dg category (for details see \cite{KELS}). The homotopy category $H^0(\mathcal{A})$ has the same objects as $\mathcal{A}$ and as morphisms $H^0(\mathrm{Hom}_{\mathcal{A}}(x,y))$. A source of examples is provided by schemes since the derived category of perfect complexes $\mathrm{perf}(X)$ of any quasi-projective scheme $X$ admits a canonical (unique) dg enhancement $\mathrm{perf}_{dg}(X)$ (see \cite{LOS}, Theorem 7.9). Denote by $\textbf{dgcat}$ the category of small dg categories. The \emph{opposite} dg category $\mathcal{A}^{op}$ has the same objects as $\mathcal{A}$ and $\mathrm{Hom}_{\mathcal{A}^{op}}(x,y):=\mathrm{Hom}_{\mathcal{A}}(y,x)$. A \emph{right $\mathcal{A}$-module} is a dg functor $\mathcal{A}^{op}\rightarrow C_{dg}(k)$ with values in the dg category $C_{dg}(k)$ of complexes of $k$-vector spaces. We write $C(\mathcal{A})$ for the category of right $\mathcal{A}$-modules. Recall form \cite{KELS} that the \emph{derived category} $D(\mathcal{A})$ of $\mathcal{A}$ is the localization of $C(\mathcal{A})$  with respect to quasi-isomorphisms. A dg functor $F\colon \mathcal{A}\rightarrow \mathcal{B}$ is called \emph{derived Morita equivalence} if the restriction of scalars functor $D(\mathcal{B})\rightarrow D(\mathcal{A})$ is an equivalence. The \emph{tensor product} $\mathcal{A}\otimes \mathcal{B}$ of two dg categories is defined as follows: the set of objects is the cartesian product of the sets of objects in $\mathcal{A}$ and $\mathcal{B}$ and $\mathrm{Hom}_{\mathcal{A}\otimes \mathcal{B}}((x,w),(y,z)):=\mathrm{Hom}_{\mathcal{A}}(x,y)\otimes\mathrm{Hom}_{\mathcal{B}}(w,z)$ (see \cite{KELS}). Given two dg categories $\mathcal{A}$ and $\mathcal{B}$, let $\mathrm{rep}(\mathcal{A},\mathcal{B})$ be the full triangulated subcategory of $D(\mathcal{A}^{op}\otimes \mathcal{B})$ consisting of those $\mathcal{A}-\mathcal{B}$-bimodules $M$ such that $M(x,-)$ is a compact object of $D(\mathcal{B})$ for every object $x\in \mathcal{A}$. %Now there is a additive symmetric monoidal category $\mathrm{Hmo}_0$ with objects being small dg categories and morphisms being
%\begin{eqnarray*}
%	\mathrm{Hom}_{\mathrm{Hmo}_0}(\mathcal{A},\mathcal{B})\simeq K_0(\mathrm{rep}(\mathcal{A},\mathcal{B})).
%\end{eqnarray*} 
The category $\textbf{dgcat}$ of all (small) dg categories and dg functors carries a Quillen model structure whose weak equivalences are Morita equivalences. Let us denote by $\mathrm{Hmo}$ the homotopy category hence obtained and by $\mathrm{Hmo}_0$ its additivization. Now to any small dg category $\mathcal{A}$ one can associate functorially its \emph{noncommutative motive} $U(\mathcal{A})$ which takes values in $\mathrm{Hmo}_0$. This functor $U\colon \textbf{dgcat}\rightarrow \mathrm{Hmo}_0$ is proved to be the \emph{universal additive invariant} (see \cite{TA1S}). An additive invariant is any functor $E\colon \textbf{dgcat}$ $\rightarrow \mathcal{D}$ taking values in an additive category $\mathcal{D}$ such that
%To any such small dg category $\mathcal{A}$ one can associate functorially its noncommutative motive $U(\mathcal{A})$ which takes values in $\mathrm{Hmo}_0$. This functor $U\colon \textbf{dgcat}\rightarrow \mathrm{Hmo}_0$ is proved to be the \emph{universal additive invariant} (see \cite{TTS}). Recall from \cite{TA1S} that an additive invariant is any functor $E\colon \textbf{dgcat}$ $\rightarrow \mathcal{D}$ taking values in an additive category $\mathcal{D}$ such that
\begin{itemize}
	\item[(\textbf{i})] it sends derived Morita equivalences to isomorphisms,\\
	
	\item[(\textbf{ii})] for any pre-triangulated dg category $\mathcal{A}$ admitting full pre-triangulated dg subcategories $\mathcal{B}$ and $\mathcal{C}$ such that $H^0(\mathcal{A})=\langle H^0(\mathcal{B}), H^0(\mathcal{C})\rangle$ is a semiorthogonal decomposition, the morphism $E(\mathcal{B})\oplus E(\mathcal{C})\rightarrow E(\mathcal{A})$ induced by the inclusions is an isomorphism.
\end{itemize}
The category $\mathrm{NChow}(k)$ of \emph{noncommutative motives} is the pseudo-Abelian envelope of the full subcategory of $\mathrm{Hmo}_0(k)$ consisting of smooth and proper dg categories (see \cite{TS} for details). Recall that every additive invariant $E\colon \textbf{dgcat}$ $\rightarrow \mathcal{D}$ factors through $U\colon \textbf{dgcat}\rightarrow \mathrm{Hmo}_0$. The \emph{noncommutative Chow motive} of a smooth projective $k$-scheme $X$ is defined to be $U(\mathrm{perf}(X))$. In the present paper we will apply the three theorems stated below. We use the following notation: Let $G$ split simply connected semi-simple algebraic group over the field $k$ and $P$ a parabolic subgroup. Recall from \cite{PAS}, Theorem 2.10 that there exits a finite free $\mathrm{Ch}$-homogeneous basis of $R(\widetilde{P})$ over $R(\widetilde{G})$. Moreover, associated to a 1-cocycle $\gamma\colon \mathrm{Gal}(k^s|k)\rightarrow G(k^s)$ and each character $\chi\in \mathrm{Ch}$ one has the Tits' algebras $A_{\chi,\gamma}$ (see \cite{PAS}, 3.1 or \cite{KNUS}, p.377).
%one has the Tit's map (see \cite{PA}, 3.1 or \cite{KNU}, p.377) $\beta_{\gamma}\colon \mathrm{Ch}\rightarrow \mathrm{Br}(k)$ which is a group homomorphism and assigns to each character $\chi\in \mathrm{Ch}$ a central simple algebra $A_{\chi,\gamma}\in \mathrm{Br}(k)$. 
If $\rho_1,...,\rho_n$ is the $\mathrm{Ch}$-homogeneous $R(\widetilde{G})$ basis of $R(\widetilde{P})$ we write $\chi(i)$ for the character such that $\rho_i\in R^{\chi(i)}(\widetilde{P})$. Under this notation one has the following theorem:
\begin{thm}\cite[Theorem 2.1 (i)]{TA1S}
	Let $G$, $P$ and $\gamma$ be as above and $E\colon dgcat\rightarrow D$ an additive invariant. Then every $\mathrm{Ch}$-homogeneous basis $\rho_1,...,\rho_n$ of $R(\widetilde{P})$ over $R(\widetilde{G})$ give rise to an isomorphism
	\begin{eqnarray*}
		\bigoplus^n_{i=1}E(A_{\chi(i),\gamma})\stackrel{\sim}\longrightarrow E({_\gamma}X), 
	\end{eqnarray*}
	where $A_{\chi(i),\gamma}$ stands for the Tits' central simple algebras associated to $\rho_i$. %via $\beta_{\gamma}\colon \mathrm{Ch}\rightarrow \mathrm{Br}(k)$.
\end{thm}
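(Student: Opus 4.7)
The plan is to reduce the statement to a single semiorthogonal decomposition at the dg level and then invoke property (ii) of an additive invariant. More precisely, I would first establish a semiorthogonal decomposition
\[
\mathrm{perf}_{dg}({_\gamma}X) \;=\; \bigl\langle \mathrm{perf}_{dg}(A_{\chi(1),\gamma}), \ldots, \mathrm{perf}_{dg}(A_{\chi(n),\gamma}) \bigr\rangle,
\]
indexed by the chosen $\mathrm{Ch}$-homogeneous basis $\rho_1,\ldots,\rho_n$ of $R(\widetilde{P})$ over $R(\widetilde{G})$, and then apply $E$. By the derived-Morita invariance in (i), $E(\mathrm{perf}_{dg}(A_{\chi(i),\gamma}))=E(A_{\chi(i),\gamma})$; by (ii), the semiorthogonal decomposition splits under $E$ into a direct sum. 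Combining the two yields the required isomorphism.

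To produce the decomposition I would rely on Panin's theorem \cite{PA}, Theorem 2.10, which (at the level of $K_0$) already furnishes the basis $\rho_1,\ldots,\rho_n$ together with the identification of the $i$-th summand with the $K$-theory of the Tits algebra $A_{\chi(i),\gamma}$. The first step is to lift Panin's $K$-theoretic argument to the derived-categorical level: over the separable closure $k^s$, the basis elements $\rho_i$ correspond to $\widetilde{G}_{k^s}$-equivariant vector bundles on $(G/P)\otimes_k k^s$ generating a full exceptional-type decomposition of $D^b((G/P)\otimes_k k^s)$. The second step is Galois descent: twisting by $\gamma$, the $i$-th piece descends over $k$, and the obstruction to descending a single equivariant generator is precisely measured by the character $\chi(i)\in\mathrm{Ch}$, so the descended admissible subcategory is equivalent to $D^b(A_{\chi(i),\gamma})$ by definition of the Tits algebra. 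The third, mostly formal, step is to promote this triangulated statement to a semiorthogonal decomposition of the canonical dg enhancement, which is allowed since each admissible subcategory is generated by a single compact object (namely the descended generator).

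Once the dg-level decomposition is in hand, condition (ii) applied iteratively gives
\[
E({_\gamma}X) \;=\; E\bigl(\mathrm{perf}_{dg}({_\gamma}X)\bigr) \;\simeq\; \bigoplus_{i=1}^n E\bigl(\mathrm{perf}_{dg}(A_{\chi(i),\gamma})\bigr) \;\simeq\; \bigoplus_{i=1}^n E(A_{\chi(i),\gamma}),
\]
and tracing back the isomorphism shows that it is induced by the bimodules coming from the descended generators, hence by the basis elements $\rho_i$.

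The main obstacle is the descent/identification step: one must show that the admissible subcategory of $D^b({_\gamma}X)$ corresponding to $\rho_i$ is equivalent to $D^b$ of exactly the Tits algebra $A_{\chi(i),\gamma}$, rather than some other Brauer-equivalent or merely derived-equivalent algebra. This is where the character $\chi(i)$ enters in a decisive way, matching the 2-cocycle obstruction to descending the equivariant generator with the defining 2-cocycle of $A_{\chi(i),\gamma}$ via the pairing between $\mathrm{Ch}$ and the group of continuous characters of $\mathrm{Gal}(k^s|k)$ arising from $\gamma$. Up to this compatibility check, which is the technical heart of \cite{PA} and \cite{MPW}, the argument is purely formal.
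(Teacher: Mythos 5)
You should first note that the paper contains no proof of this statement at all: Theorem 4.1 is quoted verbatim from \cite{TA1}, Theorem 2.1(i), and is used throughout as a black box, so there is no internal argument to compare yours against. Judged on its own merits, your proposal has a genuine gap at its very first step. You propose to establish a semiorthogonal decomposition $\mathrm{perf}_{dg}({_\gamma}X)=\langle \mathrm{perf}_{dg}(A_{\chi(1),\gamma}),\dots,\mathrm{perf}_{dg}(A_{\chi(n),\gamma})\rangle$ and then apply axiom (ii) of an additive invariant. Such a decomposition is not known to exist for a general twisted flag, and its existence is strictly stronger than the theorem you are trying to prove. Already over the separable closure, your construction requires that the equivariant bundles attached to Panin's basis $\rho_1,\dots,\rho_n$ generate $D^b((G/P)\otimes_k k^s)$ and form a full exceptional collection; as the introduction of this very paper recalls, fullness of the Kuznetsov--Polishchuk collections on homogeneous spaces is an open problem, and Panin's theorem (\cite{PA}, Theorem 2.10) is a statement about $K_*$ proved by filtration and localization techniques that do not yield a semiorthogonal decomposition. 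The cases where the decomposition you want is actually known (Bernardara for Brauer--Severi varieties, Blunk for certain involution varieties, generalized Brauer--Severi varieties in characteristic zero) are exactly the special cases treated separately in Section 5, whereas Theorem 4.1 is asserted for every $G/P$.

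The argument in \cite{TA1} avoids this issue by never leaving the motivic world. Panin's bimodules induce a morphism $\theta\colon \bigoplus_{i=1}^n U(A_{\chi(i),\gamma})\to U(\mathrm{perf}_{dg}({_\gamma}X))$ in $\mathrm{Hmo}_0$, and one proves that $\theta$ is invertible by exploiting the corepresentability of $K$-theory, i.e. $\mathrm{Hom}_{\mathrm{Hmo}_0}(U(\mathcal{A}),U(\mathcal{B}))\simeq K_0(\mathrm{rep}(\mathcal{A},\mathcal{B}))$, together with Panin's $K$-theoretic isomorphism; the statement for an arbitrary additive invariant $E$ then follows from the universal property of $U$. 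A direct-sum decomposition of the noncommutative motive is genuinely weaker than a semiorthogonal decomposition of the derived category, and the theorem asserts only the former. To salvage your route you would have to prove the open generation statement; alternatively, replace your first step by the motivic argument just described. Your final paragraph, matching the descent obstruction measured by $\chi(i)$ with the defining cocycle of the Tits algebra, is the right mechanism for identifying the endomorphism algebras of Panin's bundles, but it does not address the fullness problem, which is where your proof actually breaks down.
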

\begin{thm}\cite[Theorem 3.3]{TA1S}
	Let $G$, $P$ and $\gamma$ as in Theorem 4.1. Then $\bigoplus^n_{i=1}U(k)\simeq U({_\gamma}X)$ if and only if the Brauer classes $[A_{\chi(i),\gamma}]$ are trivial.
\end{thm}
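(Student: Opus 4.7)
The plan is to bootstrap off Theorem~4.1 and then argue via an indecomposability/cancellation argument inside $\mathrm{Hmo}_0$. Since the universal additive invariant $U$ is itself an additive invariant, applying Theorem~4.1 with $E=U$ yields
\begin{eqnarray*}
U({_\gamma}X) \;\simeq\; \bigoplus_{i=1}^n U(A_{\chi(i),\gamma}).
\end{eqnarray*}
The claim therefore reduces to showing that, for central simple $k$-algebras $A_1,\dots,A_n$, one has $\bigoplus_{i=1}^n U(A_i)\simeq U(k)^{\oplus n}$ in $\mathrm{Hmo}_0$ if and only if every Brauer class $[A_i]$ vanishes.

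The ``if'' direction is formal: $[A_i]=0$ means $A_i\simeq M_{m_i}(k)$ by Wedderburn, so each $A_i$ is Morita equivalent to $k$, and since $U$ is invariant under derived Morita equivalence one obtains $U(A_i)\simeq U(k)$; taking the direct sum gives the claim.

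For the ``only if'' direction the strategy is to exploit a Krull--Schmidt phenomenon in the full additive subcategory of $\mathrm{Hmo}_0$ spanned by noncommutative motives of central simple $k$-algebras. First I would verify that each $U(A)$ is indecomposable: using that morphisms in $\mathrm{Hmo}_0$ are computed as $K_0$ of the $\mathrm{rep}$-category and that the canonical map $A^{\mathrm{op}}\otimes_k A\simeq M_{(\mathrm{deg}\,A)^2}(k)$ is an isomorphism of CSAs, one obtains
\begin{eqnarray*}
\mathrm{End}_{\mathrm{Hmo}_0}(U(A)) \;\simeq\; K_0(A^{\mathrm{op}}\otimes_k A) \;\simeq\; \mathbb{Z},
\end{eqnarray*}
a ring with no non-trivial idempotents, so $U(A)$ does not decompose non-trivially. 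Next I would observe that $U(A)\simeq U(B)$ in $\mathrm{Hmo}_0$ forces $[A]=[B]$: a mutually inverse pair of morphisms lifts to a derived Morita equivalence $D(A)\simeq D(B)$, and for semisimple algebras this is an honest Morita equivalence, which by classical Brauer theory identifies Brauer classes. Granting a Krull--Schmidt uniqueness statement for the multiset of indecomposable summands, the isomorphism $\bigoplus U(A_i)\simeq U(k)^{\oplus n}$ then forces $U(A_i)\simeq U(k)$ for each $i$, i.e.\ $[A_i]=0$.

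The main obstacle is precisely this Krull--Schmidt step, as $\mathrm{Hmo}_0$ is not \emph{a priori} Krull--Schmidt. The way I would circumvent the need for an abstract cancellation theorem is to construct, for each Brauer class $\beta\in \mathrm{Br}(k)$ with chosen division representative $D_\beta$, an explicit $\mathbb{Z}$-valued additive invariant that detects the multiplicity of $U(D_\beta)$-summands (for instance, via a suitable $K_0$-functor composed with base change to a generic splitting field for $D_\beta$, combined with index computations). Applied to both sides of $\bigoplus U(A_i)\simeq U(k)^{\oplus n}$, such detectors force every non-trivial Brauer class to appear with multiplicity zero among the $[A_i]$, which finishes the proof and is, I expect, the route taken in \cite{TA1}, Theorem~3.3.
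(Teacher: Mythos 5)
This statement is quoted in the paper from \cite{TA1}, Theorem 3.3, and the paper gives no proof of it; so there is nothing internal to compare against. Within the paper's own logic the hard direction would in any case follow at once from Theorem 4.3 (\cite{TA}, Theorem 2.19), quoted immediately afterwards: apply it with all $B_j=k$, so that $[A_{\sigma_p(i)}^p]=0$ for all $i$ and $p$, hence all $[A_{\chi(i),\gamma}]$ vanish. Your reduction via Theorem 4.1 with $E=U$ and your ``if'' direction (Wedderburn plus Morita invariance of $U$) are correct, and the computation $\mathrm{End}_{\mathrm{Hmo}_0}(U(A))\simeq K_0(A^{\mathrm{op}}\otimes_k A)\simeq K_0(M_{(\deg A)^2}(k))\simeq\mathbb{Z}$, hence indecomposability of $U(A)$, is also correct.

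The ``only if'' direction, however, has two genuine gaps. First, the claim that an isomorphism $U(A)\simeq U(B)$ ``lifts to a derived Morita equivalence'' is unjustified: morphisms in $\mathrm{Hmo}_0$ are elements of $K_0(\mathrm{rep}(A,B))$, i.e.\ \emph{virtual} bimodules, and a pair of mutually inverse $K_0$-classes need not be represented by an actual invertible bimodule. This is precisely why \cite{TA}, Theorem 2.19 is nontrivial and why its conclusion involves prime-by-prime permutations $\sigma_p$ rather than a single identification of Brauer classes. Second, the Krull--Schmidt step cannot be waved through: since $\mathrm{End}(U(A))\simeq\mathbb{Z}$ is not a local ring, Azumaya's theorem does not apply, and cancellation can fail in additive categories whose objects have endomorphism ring $\mathbb{Z}$. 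You acknowledge this and propose multiplicity-detecting additive invariants, but you do not construct them, and that construction is the entire content of the theorem. (A workable version of your idea for this special case: the natural transformation $K_0(-)\to K_0(-\otimes_k\bar{k})$ of additive functors on $\mathrm{Hmo}_0$ has cokernel $\mathbb{Z}/\mathrm{ind}(A)$ on $U(A)$ and $0$ on $U(k)$, so an isomorphism $\bigoplus_i U(A_i)\simeq U(k)^{\oplus n}$ forces $\mathrm{ind}(A_i)=1$ for all $i$. As written, though, your argument is incomplete.)
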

For central simple $k$-algebras one has the following comparison theorem:
\begin{thm}\cite[Theorem 2.19]{TAS}
	Let $A_1,...,A_n$ and $B_1,...,B_m$ be central simple $k$-algebras, then the following are equivalent:
	\begin{itemize}
		\item[(\textbf{i})] There is an isomorphism 
		\begin{eqnarray*}
			\bigoplus^n_{i=1}U(A_i)\simeq \bigoplus^m_{j=1}U(B_j).
		\end{eqnarray*}
		
		\item[(\textbf{ii})] The equality $n=m$ holds and for all $1\leq i\leq n$ and all $p$ 
		\begin{eqnarray*}
			[B^p_i]=[A^p_{\sigma_p(i)}]\in \mathrm{Br}(k)
		\end{eqnarray*}
		for some permutations $\sigma_p$ depending on $p$.
	\end{itemize}
\end{thm}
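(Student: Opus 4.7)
The plan is to prove both implications using the universal additive invariant together with a prime-by-prime analysis of the Brauer group. Recall the canonical decomposition $\mathrm{Br}(k) = \bigoplus_p \mathrm{Br}(k)\{p\}$ into $p$-primary components and the fact that every central simple $k$-algebra $A$ is Brauer-equivalent to a tensor product $A^{p_1}\otimes \cdots \otimes A^{p_r}$ of central simple algebras $A^{p_j}$ whose Brauer classes are the $p_j$-primary components of $[A]$. Since $U(A)$ depends only on $[A]$ in $\mathrm{Hmo}_0$, this tensor decomposition translates into the statement that $U(A)$ is intrinsically built from a collection of ``$p$-primary motives'' indexed by the primes dividing $\mathrm{ind}(A)$. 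The key mechanism the proof exploits is that different primes are \emph{motivically independent} in $\mathrm{Hmo}_0$, so isomorphisms of direct sums of $U(A_i)$'s can be tested and matched one prime at a time.

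For (i)$\Rightarrow$(ii), the first step is to extract $n = m$ (for instance via rank of $K_0$, which is additive on semiorthogonal decompositions). The second step is, for each prime $p$, to apply a suitable $p$-local additive invariant $F_p\colon \mathrm{Hmo}_0 \rightarrow \mathcal{C}_p$ — a natural choice being $K$-theory with $\mathbb{Z}_{(p)}$- or $\mathbb{F}_p$-coefficients, or $\ell$-adic topological $K$-theory at $\ell = p$ — so that $F_p(U(A))$ depends on and detects only the $p$-primary Brauer class $[A^p]$. Applying $F_p$ to the isomorphism in (i) then reduces to a matching problem of two multisets $\{[A_i^p]\}_{i=1}^n$ and $\{[B_j^p]\}_{j=1}^n$ of $p$-primary Brauer classes inside $\mathrm{Br}(k)\{p\}$. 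A Krull–Schmidt-type uniqueness in the $p$-local target category $\mathcal{C}_p$ (together with the fact that $F_p(U(A^p))$ is ``atomic'' and determines $[A^p]$) furnishes the desired permutation $\sigma_p$. Crucially, nothing forces the permutations for different primes to coincide, which is precisely why (ii) allows $\sigma_p$ to depend on $p$.

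For (ii)$\Rightarrow$(i) one proceeds constructively. Fix decompositions $A_i \sim \bigotimes_p A_i^p$ and $B_j \sim \bigotimes_p B_j^p$. For each prime $p$, the hypothesis gives a permutation $\sigma_p$ with $[A^p_{\sigma_p(i)}] = [B^p_i]$ in $\mathrm{Br}(k)$, hence $U(A^p_{\sigma_p(i)}) \simeq U(B^p_i)$ in $\mathrm{Hmo}_0$. Using that $U$ of a central simple algebra factors into a ``product over primes'' via the tensor decomposition, one assembles these prime-by-prime identifications into a single isomorphism $\bigoplus_{i=1}^n U(A_i) \simeq \bigoplus_{j=1}^n U(B_j)$; concretely, one writes both sides as sums indexed by pairs (index $i$, prime $p$) of atomic pieces and matches them using the $\sigma_p$.

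The main obstacle is the formalization and verification of the Krull–Schmidt/cancellation step for the $p$-local invariants $F_p$: one must show both that the motive $F_p(U(A))$ is genuinely ``indecomposable in a way detected by $[A^p]$'', and that direct sums of such atomic pieces admit unique decomposition up to permutation inside $\mathcal{C}_p$. This is what rules out spurious matchings and pins the equivalence to the permutations $\sigma_p$. Once this $p$-local rigidity is in place, the remaining content — reassembling a single isomorphism of noncommutative motives from prime-by-prime data — is a matter of monoidal bookkeeping using the tensor decomposition $A \sim \bigotimes_p A^p$.
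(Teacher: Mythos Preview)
The paper itself does not prove this statement; it is quoted verbatim from Tabuada--Van den Bergh \cite{TA}, so there is no in-paper argument to compare against. Judging your sketch against the actual proof in \cite{TA}: your direction (i)$\Rightarrow$(ii) is close in spirit, but the suggested invariants (algebraic $K$-theory with $\mathbb{F}_p$-coefficients, $\ell$-adic topological $K$-theory) are not what does the work. The mechanism in \cite{TA} is a change of coefficients \emph{inside} $\mathrm{Hmo}_0$: one computes that $\mathrm{Hom}(U(A),U(B))\simeq K_0(A^{op}\otimes B)\simeq\mathbb{Z}$ with composition of generators given by the index ratio $\mathrm{ind}(A^{op}\otimes B)\,\mathrm{ind}(B^{op}\otimes C)/\mathrm{ind}(A^{op}\otimes C)$, so over $\mathbb{F}_p$ one has $U(A)_{\mathbb{F}_p}\simeq U(B)_{\mathbb{F}_p}$ iff $p\nmid\mathrm{ind}(A^{op}\otimes B)$, i.e.\ iff the $p$-primary parts of $[A]$ and $[B]$ coincide. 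Since every endomorphism ring in $\mathrm{CSA}(k)^{\oplus}_{\mathbb{F}_p}$ is the local ring $\mathbb{F}_p$, Krull--Schmidt applies and produces $\sigma_p$.

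Your direction (ii)$\Rightarrow$(i) has a genuine gap. The claim that ``$U$ of a central simple algebra factors into a product over primes'' and that $\bigoplus_i U(A_i)$ can be rewritten as a sum of atomic pieces indexed by pairs $(i,p)$ is false: each $U(A)$ is indecomposable in $\mathrm{Hmo}_0$, since $\mathrm{End}(U(A))\simeq K_0(A^{op}\otimes A)\simeq\mathbb{Z}$ has no nontrivial idempotents. Hence there is no way to ``reassemble'' an integral isomorphism by permuting $p$-local summands. What \cite{TA} does instead is construct an explicit morphism $\bigoplus_i U(A_i)\to\bigoplus_j U(B_j)$ as an $n\times n$ integer matrix (using the description of Hom-groups above) and then prove it is invertible by checking that its reduction modulo every prime $p$ is invertible; the hypothesis on $\sigma_p$ is exactly what makes each mod-$p$ reduction a permutation-type matrix. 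This local-to-global step is the substantive content of (ii)$\Rightarrow$(i) and is absent from your outline.
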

For the proofs of our main results we also need Theorem 4.4 below. Let $\mathrm{CSep}(k)$ be the full subcategory of $\mathrm{NChow}(k)$ consisting of objects of the form $U(A)$ with $A$ a commutative separable $k$-algebra. Analogously, $\mathrm{Sep}(k)$ denotes the full subcategory of $\mathrm{NChow}(k)$ consisting of objects $U(A)$ with $A$ a separable $k$-algebra. And finally, we write $\mathrm{CSA}(k)$ for the full subcategory of $\mathrm{Sep}(k)$ consisting of $U(A)$ with $A$ being a central simple $k$-algebra. Moreover, $\mathrm{CSA}(k)^{\oplus}$ denotes the closure of $\mathrm{CSA}(k)$ under finite direct sums.
\begin{thm}\cite[Corollary 2.13]{TAS}
	There is an equivalence of categories
	\begin{eqnarray*}
		\{U(k)^{\oplus n}\mid n\geq 0\}\simeq\mathrm{CSA}(k)^{\oplus}\times_{\mathrm{Sep}(k)} \mathrm{CSep}(k)
	\end{eqnarray*}
	i.e. $\{U(k)^{\oplus n}\mid n\geq 0\}$ is a $2$-pullback of categories with respect to the inclusion morphisms.
\end{thm}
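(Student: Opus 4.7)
The plan is to construct a natural ``diagonal'' functor
\[
\Phi\colon \{U(k)^{\oplus n}\mid n\geq 0\}\longrightarrow \mathrm{CSA}(k)^{\oplus}\times_{\mathrm{Sep}(k)} \mathrm{CSep}(k),
\]
sending $U(k)^{\oplus n}$ to the pair $(U(k)^{\oplus n},U(k)^{\oplus n})$ equipped with the identity as its comparison isomorphism in $\mathrm{Sep}(k)$, and to show that $\Phi$ is an equivalence of categories. Since $\mathrm{CSA}(k)^{\oplus}$, $\mathrm{CSep}(k)$ and $\{U(k)^{\oplus n}\}$ all sit as full subcategories of $\mathrm{Sep}(k)$, and morphisms in the $2$-pullback are by definition pairs of morphisms projecting to a single morphism in $\mathrm{Sep}(k)$, the full faithfulness of $\Phi$ is essentially formal. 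The substance of the theorem therefore lies in essential surjectivity.

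For essential surjectivity I would start with a triple $(X,Y,\varphi)$ in the pullback, and write $X\simeq \bigoplus_i U(A_i)$ with each $A_i$ central simple over $k$, and $Y\simeq \bigoplus_j U(L_j)$ with each $L_j/k$ a finite separable field extension (using Wedderburn applied to the commutative separable algebra underlying $Y$ together with Morita invariance of $U$). The goal is to match these two decompositions summand by summand and conclude that every $A_i$ is Brauer-trivial and every $L_j$ equals $k$. The clean single-summand input is the endomorphism computation
\[
\mathrm{End}_{\mathrm{Hmo}_0}(U(A))\simeq K_0(A\otimes_k A^{\mathrm{op}})\simeq K_0(k)=\mathbb{Z}
\]
by Morita, versus
\[
\mathrm{End}_{\mathrm{Hmo}_0}(U(L))\simeq K_0(L\otimes_k L)\simeq \mathbb{Z}^r,
\]
where $r$ is the number of simple factors of the separable $k$-algebra $L\otimes_k L$. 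An isomorphism $U(A)\simeq U(L)$ then forces $r=1$; but $L\otimes_k L$ is a field only when $L=k$ (its multiplication map to $L$ would otherwise have non-trivial kernel), and in that case Theorem~4.3 applied to $U(A)\simeq U(k)$ yields $[A]=0\in\mathrm{Br}(k)$, so $A$ is a matrix algebra over $k$.

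The main obstacle is to justify the summand-by-summand matching, which is a Krull--Schmidt-type statement inside $\mathrm{NChow}(k)$. The endomorphism computation above already shows that each $U(A_i)$ is indecomposable, because $\mathbb{Z}$ has no non-trivial idempotents; so the $U(A_i)$ must appear as indecomposable summands of $Y\simeq \bigoplus_j U(L_j)$, and one would refine the latter into its indecomposable pieces by splitting each $U(L_j)$ via the idempotents of $\mathrm{End}(U(L_j))\simeq \mathbb{Z}^r$, then invoke uniqueness of this refinement. An alternative route is to base-change $\varphi$ along a finite Galois extension $F/k$ that simultaneously splits every $A_i$ and trivialises every $L_j\otimes_k F$, collapse both sides to copies of $U(F)$, and descend the resulting Galois-equivariant matching back to $k$. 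Either way, once the pairs $(A_i,L_{\sigma(i)})$ are matched, the single-summand analysis above applies to each pair and gives $X\simeq Y\simeq U(k)^{\oplus n}$, establishing essential surjectivity and completing the proof that $\Phi$ is an equivalence.
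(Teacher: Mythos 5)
This statement is not proved in the paper at all: it is quoted verbatim from Tabuada--Van den Bergh (\cite{TA}, Corollary 2.13), so there is no in-paper argument to compare against. Judged on its own, your proposal has the right skeleton --- the diagonal functor, the observation that full faithfulness is formal because everything sits fully inside $\mathrm{Sep}(k)$, and the reduction of essential surjectivity to showing that an isomorphism $\bigoplus_i U(A_i)\simeq\bigoplus_j U(L_j)$ forces all $A_i$ to split and all $L_j$ to equal $k$. The single-summand endomorphism computation ($K_0(A\otimes_k A^{op})\simeq\mathbb{Z}$ versus $K_0(L\otimes_k L)\simeq\mathbb{Z}^r$, and $r=1$ iff $L=k$) is correct and is genuinely part of the toolkit used in \cite{TA}.

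The gap is exactly where you flagged it, and neither of your two proposed repairs closes it. First, you cannot ``split each $U(L_j)$ via the idempotents of $\mathrm{End}(U(L_j))\simeq\mathbb{Z}^r$'': that isomorphism is only additive. The ring structure is $K_0(L\otimes_k L)$ under convolution of bimodules, which for $L/k$ Galois is the group ring $\mathbb{Z}[\mathrm{Gal}(L/k)]$ --- a ring whose only idempotents are $0$ and $1$. So $U(L)$ is already indecomposable for every finite separable field extension $L/k$, and the refinement you want does not exist. Second, even with both sides written as sums of indecomposables, ``invoke uniqueness of this refinement'' presupposes a Krull--Schmidt property: since the endomorphism rings $\mathbb{Z}$ and $\mathbb{Z}[G]$ are not local, Krull--Schmidt--Azumaya does not apply, and $\mathrm{NChow}(k)$ is not known to be a Krull--Schmidt category. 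The subtlety is real: Theorem 4.3 of the paper (\cite{TA}, Theorem 2.19) shows that summands can only be matched prime by prime, with a different permutation $\sigma_p$ for each $p$, so naive one-to-one matching of summands is false in general. Your alternative route --- base change to a splitting field $F$ and ``descend the Galois-equivariant matching'' --- discards precisely the Brauer-class information the theorem is about (everything becomes $U(F)^{\oplus N}$ on both sides), and no descent mechanism for isomorphisms in $\mathrm{NChow}$ is supplied. The actual argument in \cite{TA} avoids Krull--Schmidt altogether: it rests on their explicit computation of $\mathrm{Hom}_{\mathrm{NChow}}(U(A),U(B))\simeq K_0(A^{op}\otimes_k B)$ for separable algebras together with the composition formula, whose structure constants involve indices of the relevant division algebras; analyzing the two matrices realizing the isomorphism and its inverse against these index constraints is what forces triviality of the Brauer classes and of the centers.
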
 

\begin{prop}
	Let $X$ and $Y$ be smooth projective varieties over a field $k$. Assume that $X$ and $Y$ admit full generalized exceptional collections $\mathcal{E}_0,...,\mathcal{E}_r$ respectively $\mathcal{F}_0,...,\mathcal{F}_s$ such that $\mathrm{End}(\mathcal{E}_i)\simeq A_i$ and $\mathrm{End}(\mathcal{F}_j)\simeq B_j$ are central simple algebras. Then the ordered collection $\{\mathcal{E}_i\boxtimes \mathcal{F}_j\}$ is a full generalized exceptional collection, where $\mathcal{E}_{i_1}\boxtimes\mathcal{F}_{j_1}$ precedes $\mathcal{E}_{i_2}\boxtimes\mathcal{F}_{j_2}$ iff $(i_1,j_1)\prec (i_2,j_2)$. Here $\prec$ stands for the lexicographic order on $\{0,...,r\}\times\{0,...,s\}$. Moreover, $\mathrm{End}(\mathcal{E}_i\boxtimes \mathcal{F}_j)\simeq A_i\otimes B_j$ and $\mathrm{rdim}(X\times Y)=0$ if and only if $A_i\otimes B_j\simeq M_{n(i,j)}(k)$.
\end{prop}
\begin{proof}
	That the ordered collection $\{\mathcal{E}_i\boxtimes \mathcal{F}_j\}$ is a full generalized exceptional collection follows from \cite{KUZS} and the fact that the tensor product of central simple algebras is again central simple. To show that $\mathrm{End}(\mathcal{E}_i\boxtimes \mathcal{F}_j)\simeq A_i\otimes B_j$ one uses K\"unneth formula.
	Now assume $\mathrm{rdim}(X\times Y)=0$. Let us denote the central simple algebra $A_i\otimes B_j$ by $C_{i,j}$. Then 
	\begin{eqnarray*}
		U(\mathrm{perf}_{dg}(X\times Y))\simeq \bigoplus_{(i,j)}U(C_{i,j}). 
	\end{eqnarray*}
	From \cite{ABS}, Lemma 1.20 we obtain that there is a semiorthogonal decomposition 
	\begin{eqnarray}
	D^b(X\times Y)=\langle \mathcal{A}_1,...,\mathcal{A}_n\rangle
	\end{eqnarray}
	with $\mathcal{A}_i\simeq D^b(K_i)$ and $K_i$ being \'etale $k$-algebras. Therefore
	\begin{eqnarray*}
		U(\mathrm{perf}_{dg}(X\times Y))\simeq \bigoplus_{(i,j)}U(C_{i,j})\simeq U(K_1)\oplus\cdots \oplus U(K_n).
	\end{eqnarray*}
	Using Theorem 4.4 and the universal property of fibre products, the above isomorphism gives rise to an isomorphism 
	\begin{eqnarray*}
		U(\mathrm{perf}_{dg}(X\times Y))\simeq \bigoplus_{(i,j)}U(C_{i,j})\simeq U(k)^{\oplus n}.
	\end{eqnarray*}
	Now Theorem 4.3 implies that $A_i\otimes B_j=C_{i,j}$ must be split. On the other hand if $A_i\otimes B_j=C_{i,j}$ is split, the full generalized exceptional collection $\{\mathcal{E}_i\boxtimes \mathcal{F}_j\}$ gives rise to a semiorthogonal decomposition
	\begin{eqnarray*}
		D^b(X\times Y)=\langle D^b(M_{n(i,j)}(k))\rangle_{(i,j)\in \{0,...,r\}\times\{0,...,s\}}
	\end{eqnarray*}
	and therefore $\mathrm{rdim}(X\times Y)=0$, according to \cite{ABS}, Lemma 1.20. This completes the proof.
\end{proof}
\begin{rema}
	\textnormal{If we assume in Proposition 4.5 that $\mathrm{End}(\mathcal{E}_0)\simeq \mathrm{End}(\mathcal{F}_0)\simeq k$, then $\mathrm{rdim}(X\times Y)=0$ if and only if both $A_i\simeq M_{n_i}(k)$ and $B_j\simeq M_{n_j}(k)$ split. This follows easily from the proof.}
	%\end{rema}
	%\begin{rema}
	
\end{rema}
\section{Non-existence of full exceptional collections}
In this section we show that a finite product of non-split (generalized) Brauer--Severi varieties can not have full exceptional collections. The same is true for finite products of certain twisted forms of smooth quadrics. The idea for the proofs is essentially contained in \cite{RAES}. Nevertheless, we give the proofs, adding to the literature. 

\begin{prop}\label{BR1}
	Let $X$ be a finite product of Brauer--Severi varieties. Then $X$ admits a full exceptional collection if and only if $X$ splits as the product of projective spaces.
\end{prop}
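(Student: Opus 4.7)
\emph{Proof plan.} The ``if'' direction is immediate: by Beilinson (Example 3.4) each projective space admits a full exceptional collection of line bundles, and the external tensor products of these collections, ordered lexicographically, form a full exceptional collection on $\mathbb{P}^{d_1}\times\cdots\times\mathbb{P}^{d_n}$; this uses the standard fact that $D^b(X\times Y)$ is generated by objects of the form $\mathcal{F}\boxtimes\mathcal{G}$ for smooth projective $X,Y$.

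For the converse, write $X=X_1\times\cdots\times X_n$ where each $X_i$ is a Brauer--Severi variety of dimension $d_i$ associated with a central simple $k$-algebra $A_i$ of degree $d_i+1$. Suppose $X$ admits a full exceptional collection of length $N$. By Remark 3.7 this yields a semiorthogonal decomposition of $D^b(X)$ into $N$ pieces, each equivalent to $D^b(k)$. Applying the universal additive invariant $U$ and iterating property (ii) in the definition of an additive invariant gives
\begin{eqnarray*}
U(\mathrm{perf}_{dg}(X))\;\simeq\;U(k)^{\oplus N}.
\end{eqnarray*}

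On the other hand, each $X_i$ is the twisted form of $\mathbb{P}^{d_i}=\mathrm{SL}_{d_i+1}/\widetilde{P}$ described in Example 2.1. Here $\mathrm{Ch}\simeq\mathbb{Z}/(d_i+1)\mathbb{Z}$, a $\mathrm{Ch}$-homogeneous basis of $R(\widetilde{P})$ over $R(\widetilde{G})$ consists of $d_i+1$ elements occupying distinct characters $0,1,\ldots,d_i$, and the associated Tits algebras are $A_i^{\otimes 0},\ldots,A_i^{\otimes d_i}$. Theorem 4.1 therefore gives $U(X_i)\simeq\bigoplus_{j=0}^{d_i}U(A_i^{\otimes j})$. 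Since $U$ is symmetric monoidal with $U(A)\otimes U(B)\simeq U(A\otimes_k B)$ on central simple algebras, and $\mathrm{perf}_{dg}(X_1\times\cdots\times X_n)\simeq\mathrm{perf}_{dg}(X_1)\otimes\cdots\otimes\mathrm{perf}_{dg}(X_n)$, distributing tensor over sum yields
\begin{eqnarray*}
U(X)\;\simeq\;\bigoplus_{0\le j_i\le d_i}U\bigl(A_1^{\otimes j_1}\otimes\cdots\otimes A_n^{\otimes j_n}\bigr).
\end{eqnarray*}

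Comparing the two expressions for $U(X)$ and applying Theorem 4.3 (with all $B_j=k$) forces every tensor product $A_1^{\otimes j_1}\otimes\cdots\otimes A_n^{\otimes j_n}$ to be Brauer trivial over $k$. Specializing to the tuple with $j_i=1$ and $j_\ell=0$ for $\ell\ne i$ gives $[A_i]=0\in\mathrm{Br}(k)$ for every $i$, so each $X_i\simeq\mathbb{P}^{d_i}$ is split. The step I expect to require the most care is the monoidal bookkeeping: one has to distribute the external tensor product over the sums coming from Theorem 4.1 inside $\mathrm{Hmo}_0$ and verify that the result lands in $\mathrm{CSA}(k)^{\oplus}$ in precisely the form to which Theorem 4.3 applies; once this is set up, the Brauer-triviality of each $[A_i]$ follows formally.
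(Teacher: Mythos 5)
Your proposal is correct and follows the paper's overall strategy: compute $U(\mathrm{perf}_{dg}(X))$ in two ways and compare via Theorem 4.3 (with all $B_j=k$) to force Brauer-triviality of each $A_i$. The one step where you genuinely diverge is how you obtain the decomposition $U(X)\simeq\bigoplus U(A_1^{\otimes j_1}\otimes\cdots\otimes A_n^{\otimes j_n})$. The paper gets it geometrically: it takes Bernardara's full weak exceptional collection $\{\mathcal{E}_0,\ldots,\mathcal{E}_{d_i}\}$ on each factor with $\mathrm{End}(\mathcal{E}_l)$ Brauer-equivalent to $A_i^{\otimes l}$, forms the external tensor products, checks via the K\"unneth formula that these give a full semi-exceptional collection, hence a semiorthogonal decomposition $D^b(X)=\langle D^b(C_{i,j})\rangle$ with $C_{i,j}=A_1^{\otimes i}\otimes A_2^{\otimes j}$, and only then passes to motives. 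You instead stay entirely on the motivic side, invoking Tabuada's Theorem 4.1 for each factor together with the symmetric monoidality of $U$, the derived Morita equivalence $\mathrm{perf}_{dg}(X\times Y)\simeq\mathrm{perf}_{dg}(X)\otimes\mathrm{perf}_{dg}(Y)$, and distributivity of $\otimes$ over $\oplus$ in $\mathrm{Hmo}_0$. This buys you a cleaner argument that avoids verifying semi-exceptionality of the boxtimes products, at the cost of relying on two facts the paper's Section 4 does not record (monoidality of $U$ and To\"en's description of $\mathrm{perf}_{dg}$ of a product); both are standard in the noncommutative-motives literature, so there is no gap, but you should cite them explicitly rather than fold them into ``monoidal bookkeeping.''
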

\begin{proof}
	A projective space admits a full exceptional collection according to Example 3.4. Now it is a general fact that the finite product of varieties admitting full exceptional collections have full exceptional collections, too. In fact, this follows from the K\"unneth formula and from general results on generating objects in the derived category of coherent sheaves. Note that this also follows from more general results proved in \cite{KUZS}.
	
	To prove the other implication, we use induction on the number of factors in the finite product. We restrict ourselves to prove the statement only for the case $X=Y_1\times Y_2$. Let us denote by $A_1$ and $A_2$ the central simple algebras corresponding to $Y_1$ respectively $Y_2$. Now assume that $X=Y_1\times Y_2$ admits a full exceptional collection. According to \cite{ABS}, Lemma 1.20 we get $\mathrm{rdim}(X)=0$. The assertion then follows from Proposition 4.5 and Remark 4.6, since a Brauer--Severi variety has a full generalized exceptional collection (see \cite{BERS}) which fulfills the conditions of Proposition 4.5 and Remark 4.6. In particular $A_1$ and $A_2$ are split and hence $X\simeq \mathbb{P}^r_k\times \mathbb{P}^s_k$. 
\end{proof}

\begin{rema}
	\textnormal{The above proposition implies that a product of Brauer--Severi varieties is rational over $k$ if and only if it admits a full exceptional collection. For further varieties satisfying such a condition see for instance \cite{ABS} and \cite{VS} and references therein.}
\end{rema}
\begin{prop}
	Let $X$ be a finite product of generalized Brauer--Severi varieties over a field of characteristic zero. Then $X$ admits a full exceptional collection if and only if $X$ splits as the finite product of Grassmannians.
\end{prop}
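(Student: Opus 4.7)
The plan is to mirror the strategy used in Proposition 5.1, adapting each ingredient from Brauer--Severi varieties to generalized Brauer--Severi varieties. The characteristic zero hypothesis enters only to guarantee the existence of a full weak exceptional collection on each factor; once we have such a collection, the noncommutative motive argument from Proposition 5.1 transfers verbatim.

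First I would dispose of the easy implication. In characteristic zero Kapranov's construction exhibits a full strong exceptional collection on each Grassmannian $\mathrm{Grass}_k(d,n)$, and the K\"unneth-type argument already invoked in the proof of Proposition 5.1 (cf.\ \cite{KUZ}) shows that a finite product of varieties with full exceptional collections again admits such a collection. Hence if each $X_i$ is split, $X$ admits a full exceptional collection.

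For the converse I would proceed by induction on the number of factors and restrict, exactly as in Proposition 5.1, to the case $X=Y_1\times Y_2$ with $Y_i=\mathrm{BS}(d_i,A_i)$. The key input is that in characteristic zero each generalized Brauer--Severi variety $\mathrm{BS}(d,A)$ admits a full weak exceptional collection $\{\mathcal{E}_\lambda\}$ indexed by Young diagrams $\lambda$ inside a $d\times(n-d)$ box, where $\mathrm{End}(\mathcal{E}_\lambda)$ is a central simple $k$-algebra Brauer-equivalent to a suitable tensor power $A^{\otimes |\lambda|}$ (this is the twisted analogue of Kapranov's exceptional collection; it is ultimately a consequence of Theorem 4.1 applied to the obvious $\mathrm{Ch}$-homogeneous basis of $R(\widetilde{P})$ over $R(\widetilde{G})$ for the parabolic of Example~2.2). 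Taking exterior products of these collections and ordering them lexicographically, the K\"unneth formula yields a full semi-exceptional collection
\begin{eqnarray*}
\{\mathcal{F}_\lambda\boxtimes\mathcal{G}_\mu\}_{(\lambda,\mu)}
\end{eqnarray*}
on $Y_1\times Y_2$ whose endomorphism algebras are the central simple algebras $C_{\lambda,\mu}:=A_1^{\otimes|\lambda|}\otimes A_2^{\otimes|\mu|}$.

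This full semi-exceptional collection induces a semiorthogonal decomposition
\begin{eqnarray*}
D^b(Y_1\times Y_2)=\langle D^b(C_{\lambda,\mu})\rangle_{(\lambda,\mu)},
\end{eqnarray*}
so passing to noncommutative motives we obtain
\begin{eqnarray*}
U(\mathrm{perf}_{dg}(X))\simeq\bigoplus_{(\lambda,\mu)}U(C_{\lambda,\mu}).
\end{eqnarray*}
The assumption that $X$ admits a full \emph{exceptional} collection forces $U(\mathrm{perf}_{dg}(X))\simeq U(k)^{\oplus d}$, and applying Theorem 4.3 (as in Proposition 5.1) we conclude that every $C_{\lambda,\mu}$ is Brauer-trivial. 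In particular, the single Brauer classes $[A_1]$ and $[A_2]$ obtained by taking $(\lambda,\mu)$ with $|\lambda|=1,\,|\mu|=0$ and $|\lambda|=0,\,|\mu|=1$ are trivial, so both $Y_1$ and $Y_2$ split as Grassmannians. Induction then finishes the proof.

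The main obstacle in this plan is step (3): producing the full weak exceptional collection on a single $\mathrm{BS}(d,A)$ with the required control on the endomorphism algebras. Once this is in hand, the remaining K\"unneth and noncommutative-motive bookkeeping is a direct transcription of Proposition 5.1; the characteristic zero hypothesis is used precisely to guarantee the Kapranov-type collection on the split Grassmannian which then descends to $\mathrm{BS}(d,A)$.
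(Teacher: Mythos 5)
Your proposal is correct and follows essentially the same route as the paper: the key input you flag as the ``main obstacle'' --- a full weak (semi-)exceptional collection on $\mathrm{BS}(d,A)$ indexed by partitions with endomorphism algebras Brauer-equivalent to $A^{\otimes|\lambda|}$ --- is exactly what the paper imports from Blunk \cite{BLU} (together with a Krull--Schmidt refinement replacing each $\mathcal{V}_\lambda\simeq\mathcal{W}_\lambda^{\oplus n_\lambda}$ by its indecomposable summand), after which the K\"unneth, semiorthogonal decomposition, and Theorem~4.3 steps are identical to yours.
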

\begin{proof}
	A Grassmannian over a field $k$ of characteristic zero admits a full exceptional collection according to \cite{KAS}. One can argue as in Proposition \ref{BR1} to conclude that the finite product of such Grassmannians has a full exceptional collection, too.
	
	In order to prove the other implication, note that from \cite{BLUS} it follows that a generalized Brauer--Severi variety $\mathrm{BS}(d,A)$ associated to a central simple algebra $A$ of degree $n$ over a field $k$ of characteristic zero admits a full generalized exceptional collection. To be precise, denote by $P$ the set of partitions $\lambda=(\lambda_1,...,\lambda_d)$ with $0\leq \lambda_d\leq...\leq \lambda_1\leq n-d$. One can choose a total order $\prec$ on $P$ such that $\lambda\prec \mu$ means that the Young diagram of $\lambda$ is not contained in that of $\mu$. Under this notation, in \textit{loc.cit}. it is proved that there is a full semi-exceptional collection (in the sense of Remark 3.3) $\{...\mathcal{V}_{\lambda},...,\mathcal{V}_{\mu}...\}$ with $\lambda\prec \mu$ and $\mathrm{End}(\mathcal{V}_{\lambda})$ being isomorphic to $A^{\otimes |\lambda|}$. Here $|\lambda|=\lambda_1+...+\lambda_d$, where $\lambda=(\lambda_1,...,\lambda_d)\in P$. By the Wedderburn Theorem $A^{\otimes |\lambda|}\simeq M_{n_{\lambda}}(D_{\lambda})$ with some unique central division algebras $D_{\lambda}$. In particular, this implies that the Krull--Schmidt decomposition of such a $\mathcal{V}_{\lambda}$ is given by $\mathcal{V}_{\lambda}\simeq \mathcal{W}^{\oplus n_{\lambda} }_{\lambda}$. By construction, the ordered set $\{...\mathcal{W}_{\lambda},...,\mathcal{W}_{\mu}...\}$ is a full generalized exceptional collection with $\mathrm{End}(\mathcal{W}_{\lambda})$ being isomorphic to $D_{\lambda}$ and therefore Brauer-equivalent to  $A^{\otimes |\lambda|}$. 
	As in the proof of Proposition 5.1 we restrict to the case when $X=\mathrm{BS}(d_1, A_1)\times\mathrm{BS}(d_2,A_2)$ for two central simple $k$-algebras $A_1$ and $A_2$. Now assume that $X$ admits a full exceptional collection. According to \cite{ABS}, Lemma 1.20 we get $\mathrm{rdim}(X)=0$. The assertion then follows from Proposition 4.5 and Remark 4.6. In particular $A_1$ and $A_2$ are split and hence $X$ is the product of two Grassmannians.  
\end{proof}
%and imitate the proof of Proposition 2.5.1 to conclude that for the noncommutative motive of $\mathrm{perf}_{dg}(X)$ we must have  
%\begin{eqnarray*}
%	U(\mathrm{perf}_{dg}(X))\simeq \bigoplus_{\lambda\in P}U(D_{\lambda})\simeq U(k)^{\oplus d}, 
%\end{eqnarray*}
%Than Theorem 2.4.3 implies that $D_{\lambda}$ is Brauer-equivalent to $k$ for all partitions $\lambda\in P$. In particular $A_1$ and $A_2$ are split and hence $X$ is the product of two Grassmannians.  
Now let $G=\mathrm{PSO}_n$ be over $k$ with $n$ even. Given a 1-cocycle $\gamma\colon \mathrm{Gal}(k^s|k)\rightarrow \mathrm{PSO}_n(k^s)$ we get a twisted form of a quadric ${_\gamma}Q$ which is associated to a central simple $k$-algebra $(A,\sigma)$ of degree $n$ with involution of orthogonal type. Note that ${_\gamma}Q$ is isomorphic to the involution variety $\mathrm{IV}(A,\sigma)$ from Section 2. For any splitting field $L$ of $A$, the variety ${_\gamma}Q\otimes_k L$ is isomorphic to a smooth quadric in $\mathbb{P}^{n-1}_L$. 
\begin{prop}
	Let ${_\gamma}Q$ a twisted form as above and assume the associated central simple algebra $(A,\sigma)$ has trivial discriminant $\delta(A,\sigma)$. Then $\mathrm{rdim}({_\gamma}Q)=0$ if and only if ${_\gamma}Q$ splits, i.e. if $(A,\sigma)$ splits and ${_\gamma}Q$ is a smooth quadric in $\mathbb{P}^{n-1}_{k}$. In particular, ${_\gamma}Q$ admits a full exceptional collection if and only if it splits.
\end{prop}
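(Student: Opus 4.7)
The plan is to mirror the noncommutative-motive strategy used in Propositions 5.1 and 5.3, now applied to involution varieties with trivial discriminant. The forward implication is immediate: if ${_\gamma}Q$ splits then $(A,\sigma) \simeq (M_n(k), q^*)$ and ${_\gamma}Q \simeq V(q) \subset \mathbb{P}^{n-1}_k$ is a smooth quadric, which admits a full exceptional collection by Kapranov's theorem; since every full exceptional collection yields a semiorthogonal decomposition with pieces $\langle \mathcal{E}_i\rangle \simeq D^b(\mathrm{Spec}\, k)$, we obtain $\mathrm{rdim}({_\gamma}Q)=0$. The "in particular" clause will come for free from this observation together with the converse proved below.

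For the converse, first assume $\mathrm{rdim}({_\gamma}Q)=0$. Then there is a semiorthogonal decomposition $D^b({_\gamma}Q) = \langle \mathcal{A}_1,\ldots,\mathcal{A}_m\rangle$ with each $\mathcal{A}_i$ admissible in $D^b(Y_i)$ for a smooth projective zero-dimensional $k$-variety $Y_i$. Each $Y_i$ is a finite disjoint union of spectra of finite separable extensions, and each $\mathcal{A}_i$ is then equivalent to $D^b(\mathrm{Spec}\, K_i)$ for a commutative separable $k$-algebra $K_i$. Applying the universal additive invariant $U$, which sends semiorthogonal decompositions to direct sums, gives
\[ U(\mathrm{perf}_{dg}({_\gamma}Q)) \simeq \bigoplus_{i=1}^{m} U(K_i) \]
in $\mathrm{Hmo}_0$.

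On the other hand, Theorem 4.1 applied to $\widetilde{G} = \mathrm{Spin}_n$ and the parabolic $\widetilde{P}$ defining the quadric yields $U({_\gamma}Q) \simeq \bigoplus_{j=1}^{n} U(A_{\chi(j),\gamma})$, where the $A_{\chi(j),\gamma}$ are the Tits central simple $k$-algebras associated to a $\mathrm{Ch}$-homogeneous basis of $R(\widetilde{P})$ over $R(\widetilde{G})$. The hypothesis that $\delta(A,\sigma)$ is trivial ensures that the Clifford algebra $C(A,\sigma)$ splits over $k$ as a product $C^+(A,\sigma) \times C^-(A,\sigma)$ of central simple $k$-algebras, and consequently each Tits algebra appearing above is Brauer-equivalent to a tensor product of powers of $A$, $C^+(A,\sigma)$ and $C^-(A,\sigma)$; in particular, each is central simple over $k$. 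Combining the two decompositions gives
\[ \bigoplus_{j=1}^n U(A_{\chi(j),\gamma}) \simeq \bigoplus_{i=1}^m U(K_i), \]
and Theorem 4.4 (or equivalently Theorem 4.2) then forces every class $[A_{\chi(j),\gamma}]$ to vanish in $\mathrm{Br}(k)$. In particular $[A]=0$, so $A$ is split, $(A,\sigma) \simeq (M_n(k), q^*)$, and ${_\gamma}Q \simeq V(q)$ is a split smooth quadric.

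The main obstacle I expect is verifying the assertion about the Tits algebras in the third paragraph: one must pin down which characters $\chi \in \mathrm{Ch}$ occur in the $\mathrm{Ch}$-homogeneous basis for the involution variety and confirm that, under trivial discriminant, the corresponding $A_{\chi,\gamma}$ are genuinely central simple $k$-algebras rather than central simple algebras over the quadratic discriminant extension. This is precisely the reason the hypothesis $\delta(A,\sigma) = 1$ is imposed, since it is what ensures $C^+(A,\sigma)$ and $C^-(A,\sigma)$ are $k$-central simple individually, allowing the direct appeal to Theorem 4.4.
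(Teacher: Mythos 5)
Your proposal is correct and follows essentially the same route as the paper: split case via Kapranov plus the observation that a full exceptional collection gives $\mathrm{rdim}=0$; converse via the semiorthogonal decomposition into $D^b(K_i)$ for \'etale $K_i$, the motivic decomposition of ${_\gamma}Q$ into Tits algebras $k, A, C_0^{+}(A,\sigma), C_0^{-}(A,\sigma)$ (the paper quotes this directly from Tabuada's Example 3.11 rather than rederiving it from Theorem 4.1), the trivial-discriminant hypothesis to make $C_0^{\pm}$ central simple over $k$, and Theorem 4.4 followed by Theorem 4.3 to split $A$. The only small imprecision is the phrase ``Theorem 4.4 (or equivalently Theorem 4.2)'': Theorem 4.4 is needed first to upgrade $\bigoplus_i U(K_i)$ to $U(k)^{\oplus r}$, and only then does Theorem 4.3 (or 4.2) apply to kill the Brauer classes, exactly as in the paper.
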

\begin{proof}
	If $(A,\sigma)$ is split, i.e. if ${_\gamma}Q$ is a smooth quadric in $\mathbb{P}^{n-1}_k$, the existence of a full exceptional collection follows from \cite{KA2S} (also, see the proof of Proposition 7.2 in \cite{BLUS}). Hence $\mathrm{rdim}({_\gamma}Q)=0$ according to \cite{ABS}, Lemma 1.20. Now assume $\mathrm{rdim}({_\gamma}Q)=0$. Again from \cite{ABS}, Lemma 1.20 we obtain that there is a semiorthogonal decomposition 
	\begin{eqnarray}
	D^b({_\gamma}Q)=\langle \mathcal{A}_1,...,\mathcal{A}_r\rangle
	\end{eqnarray}
	with $\mathcal{A}_i\simeq D^b(K_i)$ and $K_i$ being \'etale $k$-algebras. Therefore, the noncommutative motive of $\mathrm{perf}_{dg}({_\gamma}Q))$ decomposes as 
	\begin{eqnarray*}
		U(\mathrm{perf}_{dg}({_\gamma}Q))=U(K_1)\oplus...\oplus U(K_r).
	\end{eqnarray*}
	On the other hand (see \cite{TA1S}, Example 3.11), one has
	\begin{eqnarray*}
		U(\mathrm{perf}_{dg}({_\gamma}Q))\simeq \left(\bigoplus^{n-3}_{\substack{i\geq 0\\even}}U(k)\right)\oplus \left(\bigoplus^{n-3}_{\substack{i>0\\odd}}U(A)\right)\oplus U(C^{+}_0(A,\sigma))\oplus U(C^{-}_0(A,\sigma)).
	\end{eqnarray*}
	Therefore, we have the following isomorphism
	\begin{eqnarray*}
		\left(\bigoplus^{n-3}_{\substack{i\geq 0\\even}}U(k)\right)\oplus \left(\bigoplus^{n-3}_{\substack{i>0\\odd}}U(A)\right)\oplus U(C^{+}_0(A,\sigma))\oplus U(C^{-}_0(A,\sigma))\simeq U(K_1)\oplus...\oplus U(K_r).
	\end{eqnarray*}
	Since the discriminant $\delta(A,\sigma)$ is trivial, the even Clifford algebra $C_0(A,\sigma)$ decomposes as $C_0(A,\sigma)\simeq C^{+}_0(A,\sigma)\times C^{-}(A,\sigma)$, where $C^{+}_0(A,\sigma)$ and $C^{-}_0(A,\sigma)$ are central simple $k$-algebras (see \cite{KNUS}, Theorem 8.10). 
	Using Theorem 4.4 and the universal property of fibre products, the above isomorphism gives rise to an isomorphism 
	\begin{eqnarray*}
		\left(\bigoplus^{n-3}_{\substack{i\geq 0\\even}}U(k)\right)\oplus \left(\bigoplus^{n-3}_{\substack{i>0\\odd}}U(A)\right)\oplus U(C^{+}_0(A,\sigma))\oplus U(C^{-}_0(A,\sigma))\simeq U(k)^{\oplus r}.
	\end{eqnarray*}
	Now Theorem 4.3 implies that $A$ must be split. The same argument shows ${_\gamma}Q$ admits a full exceptional collection if and only if it splits. This completes the proof.  
\end{proof}
Note that in Proposition 5.4 the central simple algebras $k,A,C^{+}_0(A,\sigma)$ and $C^{-}_0(A,\sigma)$ are the minimal Tits' algebras of ${_\gamma}Q$. 
Recall that the Tits' algebras of the product of adjoint semi-simple algebraic groups are Brauer equivalent to the tensor product of the Tits' algebras of its factors (\cite{MPWS}, Corollary 2.3). Using Theorems 4.1, 4.2 and 4.4 and the fact that finite products of the considered varieties have full exceptional collections too, one can imitate the proofs of Proposition 5.1 and 5.3 to show easily: 
\begin{cor}
	Let $X={_{\gamma_1}}Q_1\times\cdots\times{_{\gamma_m}}Q_m$ be the finite product of twisted forms of quadrics as in Proposition 5.4. Let $(A_i,\sigma_i)$ be the central simple algebra with involution of orthogonal type associated to the factor ${_{\gamma_i}}Q_i$. Then $\mathrm{rdim}(X)=0$ if and only if $X$ splits as the product of smooth quadrics, i.e. if all $(A_i,\sigma_i)$ split. In particular, $X$ admits a full exceptional collection if and only if $X$ splits as the product of smooth quadrics.
\end{cor}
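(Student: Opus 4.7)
The plan is to imitate the arguments of Propositions 5.1 and 5.3, using Proposition 5.4 as the one-factor input and the multiplicativity of Tits' algebras under products of adjoint semisimple groups (\cite{MPW}, Corollary 2.3).

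For the easy direction, if every $(A_i,\sigma_i)$ splits, each ${_{\gamma_i}}Q_i$ is a smooth quadric in $\mathbb{P}^{n_i-1}_k$ and therefore admits a full exceptional collection by \cite{KA2}. As observed in the proof of Proposition 5.1, the K\"unneth formula together with standard generation arguments in derived categories shows that a finite product of varieties carrying full exceptional collections again carries one, so $X$ does. Consequently $\mathrm{rdim}(X)=0$ by \cite{AB}, Lemma 1.19.

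For the nontrivial direction, assume $\mathrm{rdim}(X)=0$. By \cite{AB}, Lemma 1.19 there is a semiorthogonal decomposition $D^b(X)=\langle D^b(K_1),\ldots,D^b(K_r)\rangle$ with each $K_\ell$ an \'etale $k$-algebra, so that
\begin{eqnarray*}
U(\mathrm{perf}_{dg}(X))\simeq \bigoplus_{\ell=1}^{r}U(K_\ell).
\end{eqnarray*}
On the other hand, view $X$ as a twisted flag for $G=\mathrm{PSO}_{n_1}\times\cdots\times \mathrm{PSO}_{n_m}$ under the cocycle $(\gamma_1,\ldots,\gamma_m)$, and pick a $\mathrm{Ch}$-homogeneous basis of $R(\widetilde{P})$ as a tensor product of the bases used for each factor in the proof of Proposition 5.4. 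By \cite{MPW}, Corollary 2.3 each resulting Tits' algebra of $G$ is Brauer-equivalent to a tensor product $T_{\epsilon_1}(A_1,\sigma_1)\otimes\cdots\otimes T_{\epsilon_m}(A_m,\sigma_m)$, where each $T_{\epsilon_i}$ runs through the one-factor Tits' algebras $k$, $A_i$, $C^{+}_0(A_i,\sigma_i)$, $C^{-}_0(A_i,\sigma_i)$ appearing in Proposition 5.4. Theorem 4.1 then yields
\begin{eqnarray*}
U(\mathrm{perf}_{dg}(X))\simeq \bigoplus_{(\epsilon_1,\ldots,\epsilon_m)} U\bigl(T_{\epsilon_1}(A_1,\sigma_1)\otimes\cdots\otimes T_{\epsilon_m}(A_m,\sigma_m)\bigr).
\end{eqnarray*}

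Combining the two expressions for $U(\mathrm{perf}_{dg}(X))$ and applying Theorem 4.4 (the fibre-product characterization of objects of the form $U(k)^{\oplus n}$ inside $\mathrm{CSA}(k)^{\oplus}\times_{\mathrm{Sep}(k)}\mathrm{CSep}(k)$) produces an isomorphism
\begin{eqnarray*}
\bigoplus_{(\epsilon_1,\ldots,\epsilon_m)} U\bigl(T_{\epsilon_1}(A_1,\sigma_1)\otimes\cdots\otimes T_{\epsilon_m}(A_m,\sigma_m)\bigr)\simeq U(k)^{\oplus r}.
\end{eqnarray*}
Theorem 4.3 now forces every summand to be Brauer-trivial; specializing to the multi-indices with all but the $i$-th entry equal to the trivial character, we obtain that $A_i$ and $C^{\pm}_0(A_i,\sigma_i)$ are split for each $i$. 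Hence every $(A_i,\sigma_i)$ splits and $X$ is a product of smooth quadrics. The final assertion about full exceptional collections is then immediate: existence of such a collection implies $\mathrm{rdim}(X)=0$ (Remark 3.8 together with \cite{AB}, Lemma 1.19), which by what we have just proved implies that $X$ splits; the converse was handled in the first paragraph.

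The main technical obstacle is keeping track of the combinatorics of the Tits' algebras in the product, and ensuring that the multi-index specialization argument indeed isolates each $A_i$ individually in the Brauer group; this is where \cite{MPW}, Corollary 2.3 is essential, as it reduces the computation of the product's Tits' algebras to the tensor-product of those for each factor already identified in Proposition 5.4.
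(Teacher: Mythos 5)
Your proposal is correct and follows exactly the route the paper intends: the paper does not write out a proof of Corollary 5.5 but explicitly instructs the reader to imitate Propositions 5.1 and 5.3 using \cite{MPW}, Corollary 2.3 for the Tits' algebras of the product together with Theorems 4.1, 4.3 and 4.4, which is precisely what you do, including the specialization to multi-indices isolating each $A_i$. The only blemish is the reference to ``Remark 3.8,'' which should be Remark 3.7.
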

%\begin{proof}
%The proof is analogous to the proof of Proposition 5.1.
%\end{proof}
\begin{thm}
	Let $G=\mathrm{PSO}_n$ be over $\mathbb{R}$ with $n$ even and let ${_\gamma}Q$ be a twisted form of a quadric associated to a central simple $\mathbb{R}$-algebra $(A,\sigma)$ of degree $n$ with involution of orthogonal type associated to a non-trivial 1-cocycle $\gamma$.%Given a non-trivial 1-cocycle $\gamma\colon \mathrm{Gal}(\mathbb{C}|\mathbb{R})\rightarrow \mathrm{PSO}_n(\mathbb{C})$, we get a twisted form of a quadric ${_\gamma}Q$ associated to a central simple $\mathbb{R}$-algebra $(A,\sigma)$ of degree $n$ with involution of orthogonal type associated to $\gamma$.  
	Then $\mathrm{rdim}({_\gamma}Q)=0$ if and only if ${_\gamma}Q$ splits, i.e. if $(A,\sigma)$ splits and ${_\gamma}Q$ is a smooth quadric in $\mathbb{P}^{n-1}_{k}$. In particular, ${_\gamma}Q$ admits a full exceptional collection if and only if it splits.
\end{thm}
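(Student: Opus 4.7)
The plan is to mirror the strategy of Proposition 5.4, modified to cover the case where the discriminant $\delta(A,\sigma)$ is non-trivial over $\mathbb{R}$; in that setting the even Clifford algebra $C_0(A,\sigma)$ is central simple over $\mathbb{C}$ (rather than over $\mathbb{R}$), and I will exploit the fact that $\mathrm{Br}(\mathbb{C}) = 0$. The easy implication is standard: if $(A,\sigma)$ splits, then ${_\gamma}Q$ is a smooth quadric in $\mathbb{P}^{n-1}_{\mathbb{R}}$ and admits a full exceptional collection via Kapranov \cite{KA2}, whence $\mathrm{rdim}({_\gamma}Q)=0$ by \cite{AB}, Lemma 1.19. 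For the converse, assume $\mathrm{rdim}({_\gamma}Q)=0$. If $\delta(A,\sigma)$ is trivial, Proposition 5.4 applies directly; otherwise the center of the even Clifford algebra is $\mathbb{R}[\sqrt{\delta(A,\sigma)}] \simeq \mathbb{C}$, so $C_0(A,\sigma)$ is a central simple $\mathbb{C}$-algebra, hence (since $\mathrm{Br}(\mathbb{C}) = 0$) Morita-equivalent to $\mathbb{C}$; therefore $U(C_0(A,\sigma)) \simeq U(\mathbb{C})$ in $\mathrm{Hmo}_0(\mathbb{R})$.

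Next I would compute the noncommutative motive $U(\mathrm{perf}_{dg}({_\gamma}Q))$ by two independent routes. From \cite{AB}, Lemma 1.19 and the hypothesis $\mathrm{rdim}({_\gamma}Q)=0$, the derived category $D^b({_\gamma}Q)$ admits a semiorthogonal decomposition whose components are derived categories of étale $\mathbb{R}$-algebras, each a finite product of copies of $\mathbb{R}$ and $\mathbb{C}$. This yields
\[
U(\mathrm{perf}_{dg}({_\gamma}Q)) \simeq U(\mathbb{R})^a \oplus U(\mathbb{C})^b
\]
for some $a,b \geq 0$. On the other hand, Theorem 4.1 applied to $G = \mathrm{PSO}_n$ (the non-trivial-discriminant analogue of Example 3.11 of \cite{TA1}) gives the Tits decomposition
\[
U(\mathrm{perf}_{dg}({_\gamma}Q)) \simeq U(\mathbb{R})^{(n-2)/2} \oplus U(A)^{(n-2)/2} \oplus U(C_0(A,\sigma)) \simeq U(\mathbb{R})^{(n-2)/2} \oplus U(A)^{(n-2)/2} \oplus U(\mathbb{C}).
\]
Combining the two produces an isomorphism in $\mathrm{Hmo}_0(\mathbb{R})$
\[
U(\mathbb{R})^{(n-2)/2} \oplus U(A)^{(n-2)/2} \oplus U(\mathbb{C}) \simeq U(\mathbb{R})^a \oplus U(\mathbb{C})^b. \quad (\ast)
\]

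The main obstacle is that the $U(\mathbb{C})$-summand arising from $C_0(A,\sigma)$ lies in $\mathrm{CSep}(\mathbb{R})$ but not in $\mathrm{CSA}(\mathbb{R})^{\oplus}$, which prevents a direct application of Theorem 4.4 as in the proof of Proposition 5.4. To bypass this, I would suppose for contradiction that $A$ is non-split, so by the Wedderburn classification of central simple $\mathbb{R}$-algebras $A \simeq M_{n/2}(\mathbb{H})$ and $U(A) \simeq U(\mathbb{H})$. I then apply the additive invariant $K_1$ to both sides of $(\ast)$ and compare torsion subgroups. Using $K_1(\mathbb{R})_{\mathrm{tor}} = \mathbb{Z}/2$, $K_1(\mathbb{H})_{\mathrm{tor}} = 0$ (since $K_1(\mathbb{H}) = \mathbb{R}_{>0}$ via the reduced norm), and $K_1(\mathbb{C})_{\mathrm{tor}} = \mathbb{Q}/\mathbb{Z}$, the torsion comparison
\[
(\mathbb{Z}/2)^{(n-2)/2} \oplus \mathbb{Q}/\mathbb{Z} \simeq (\mathbb{Z}/2)^a \oplus (\mathbb{Q}/\mathbb{Z})^b
\]
forces $b = 1$ (from the $\ell$-primary parts for odd primes $\ell$) and $a = (n-2)/2$ (from the $2$-primary part). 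But the $K_0$-ranks of the two sides of $(\ast)$ must coincide, giving $a + b = n - 1$, hence $(n-2)/2 + 1 = n - 1$, i.e., $n = 2$, which contradicts $n \geq 4$. Therefore $A$ must be split, $(A,\sigma)$ splits, and ${_\gamma}Q$ is a smooth quadric in $\mathbb{P}^{n-1}_{\mathbb{R}}$. The "in particular" statement about full exceptional collections follows from the same chain, since a full exceptional collection immediately produces $\mathrm{rdim}({_\gamma}Q) = 0$.
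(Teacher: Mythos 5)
Your argument is correct, and although it shares the paper's skeleton --- computing $U(\mathrm{perf}_{dg}({_\gamma}Q))$ once from the hypothesis $\mathrm{rdim}({_\gamma}Q)=0$ via \'etale $\mathbb{R}$-algebras and once from Blunk's semiorthogonal decomposition $\langle D^b(\mathbb{R}),D^b(A),\dots,D^b(C_0(A,\sigma))\rangle$ --- the mechanism for extracting the contradiction is genuinely different. The paper does not separate off the trivial-discriminant case: it treats both possibilities for $C_0(A,\sigma)$ directly, base-changes both semiorthogonal decompositions to $\mathbb{C}$ and compares $\mathrm{rk}\,K_0$ before and after to pin down $\sum_i m_i$ (the number of $U(\mathbb{C})$-summands), then cancels the $U(\mathbb{C})$ summand using Tabuada's cancellation result (\cite{TA2}, Proposition 4.5), and finally invokes Theorem 4.3 to conclude that $A$ splits. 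You instead dispose of the trivial-discriminant case by citing Proposition 5.4, and in the remaining case assume $A\simeq M_{n/2}(\mathbb{H})$ and evaluate the additive invariants $K_0$ and $K_1$ on both sides of $(\ast)$, using the torsion of $K_1(\mathbb{R})=\mathbb{R}^{\times}$, $K_1(\mathbb{C})=\mathbb{C}^{\times}$ and the torsion-freeness of $K_1(\mathbb{H})\cong\mathbb{R}_{>0}$ to overdetermine $a$ and $b$. This buys independence from both the cancellation lemma and the comparison theorem (Theorem 4.3), at the price of being tied to the special arithmetic of $\mathbb{R}$ (namely $\mathrm{Br}(\mathbb{R})=\mathbb{Z}/2$ and the explicit $K_1$'s of the three real division algebras); the paper's base-change counting is also the route that propagates to the product case in Theorem 5.7. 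Two minor points: your final contradiction ``$n=2$'' presupposes $n\geq 4$, which is implicit in the statement (otherwise $\mathrm{PSO}_n$ is not semisimple and ${_\gamma}Q$ is not a positive-dimensional quadric); and you should record explicitly that $K_1$, being the degree-one homotopy of the additive invariant given by algebraic $K$-theory, factors through the universal additive invariant $U$, so that an isomorphism in $\mathrm{Hmo}_0$ really does force an isomorphism of $K_1$-groups --- this is standard but is the load-bearing step of your variant.
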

\begin{proof}
	If $(A,\sigma)$ splits, i.e. if ${_\gamma}Q$ is a smooth quadric in $\mathbb{P}^{n-1}_{\mathbb{R}}$, the existence of a full exceptional collection follows from \cite{KA2S} (see also \cite{BLUS}). Hence $\mathrm{rdim}({_\gamma}Q)=0$ according to \cite{ABS}, Lemma 1.20. Now assume $\mathrm{rdim}({_\gamma}Q)=0$. Then the derived category $D^b({_\gamma}Q)$ must have a semiorthogonal decomposition of the form
	\begin{eqnarray}
	D^b({_\gamma}Q)=\langle \mathcal{A}_1,...,\mathcal{A}_e\rangle
	\end{eqnarray}
	with $\mathcal{A}_i\simeq D^b(\mathbb{R},K_i)$ and $K_i$ being \'etale $\mathbb{R}$-algebras (see \cite{AB1S}, Proposition 6.1.6). We notice that $K_i\simeq \mathbb{R}^{\times n_i}\times\mathbb{C}^{\times m_i}$. Now \cite{ABS}, Lemma 1.17 implies 
	\begin{eqnarray}
	D^b(K_i)\simeq D^b(\mathbb{R})^{\times n_i}\times D^b(\mathbb{C})^{\times m_i}. 
	\end{eqnarray}
	According to \cite{BLUS}, there is a semiorthogonal decomposition with exactly $n-1$ components
	\begin{eqnarray*}
		D^b({_\gamma}Q)=\langle D^b(k), D^b(A),...,D^b(k), D^b(A), D^b(C(A,\sigma))\rangle.
	\end{eqnarray*}
	Note that this semiorthogonal decomposition is induced by vector bundles $\mathcal{V}_1, \mathcal{V}_2,...,\mathcal{V}_{n-1}$ on ${_\gamma}Q$ satisfying $\mathrm{End}(\mathcal{V}_1)=k, \mathrm{End}(\mathcal{V}_2)=A,...,\mathrm{End}(\mathcal{V}_{n-1})=C(A,\sigma)$. Now the noncommutative motive of $\mathrm{perf}_{dg}({_\gamma}Q)$ decomposes as 
	\begin{eqnarray*}
		U(\mathrm{perf}_{dg}({_\gamma}Q))=U(k)\oplus U(A)\oplus...\oplus U(k)\oplus U(A)\oplus U(C(A,\sigma)).
	\end{eqnarray*}
	From \cite{KNUS}, Theorem 8.10 we know that $C(A,\sigma)$ is either a central simple algebra over $\mathbb{C}$ or that $C(A,\sigma)$ splits as the direct product of two central simple $\mathbb{R}$-algebras. In the first case $C(A,\sigma)\simeq M_s(\mathbb{C})$ whereas in the latter case $C(A,\sigma)$ is isomorphic to $A_1\times A_2$, where $A_i$ is isomorphic to $M_{n_1}(\mathbb{R})$ or $M_{n_2}(\mathbb{H})$. By Morita equivalence we have $D^b(M_s(\mathbb{C}))\simeq D^b(\mathbb{C})$, $D^b(M_{n_1}(\mathbb{R}))\simeq D^b(\mathbb{R})$ and $D^b(M_{n_2}(\mathbb{H}))\simeq D^b(\mathbb{H})$. So there are two cases to consider.\\
	\noindent 
	First case:
	If $C(A,\sigma)\simeq M_s(\mathbb{C})$, one has $\mathrm{rk}K_0({_\gamma}Q)=n-1$ and from (3) and (4) we conclude
	\begin{eqnarray}
	n-1=\sum^e_{i=1} n_i+\sum^e_{i=1} m_i.
	\end{eqnarray}
	After base change to $\mathbb{C}$, the vector bundles $\mathcal{V}_1\otimes_{\mathbb{R}}\mathbb{C}, \mathcal{V}_2\otimes_{\mathbb{R}}\mathbb{C},...,\mathcal{V}_{n-1}\otimes_{\mathbb{R}}\mathbb{C}$ on the smooth quadric ${_\gamma}Q\otimes_{\mathbb{R}}\mathbb{C}$ give rise to a semiorthogonal decomposition of $D^b({_\gamma}Q\otimes_{\mathbb{R}}\mathbb{C})$. Moreover, we have 
	\begin{eqnarray*}
		\mathrm{End}(\mathcal{V}_{n-1}\otimes_{\mathbb{R}}\mathbb{C}))\simeq\mathrm{End}(\mathcal{V}_{n-1})\otimes_{\mathbb{R}}\mathbb{C}\simeq M_s(\mathbb{C})\otimes_{\mathbb{R}}\mathbb{C}\simeq M_s(\mathbb{C})\times M_s(\mathbb{C})
	\end{eqnarray*}
	and therefore $\mathrm{rk}K_0({_\gamma}Q\otimes_{\mathbb{R}}\mathbb{C})=n$. 
	Since $\mathrm{End}(\mathcal{V}_{n-1})\otimes_{\mathbb{R}}\mathbb{C} \simeq M_s(\mathbb{C})\times M_s(\mathbb{C})$, we obtain with the semiorthogonal decomposition (3) in view of (4) that 
	\begin{eqnarray*}
		n=\sum^e_{i=1} n_i+2\sum^e_{i=1} m_i.
	\end{eqnarray*}
	Taking the difference of the latter equation with (5) we find
	\begin{eqnarray}
	1=\sum^e_{i=1}m_i
	%n=\sum^e_{i=1} n_i+2.
	\end{eqnarray}
	%From (6) and (7) we find $1=\sum^e_{i=1}m_i$. Without loss of generality, we can assume $m_1=...=m_{e-1}=0, m_e=1$. 
	Without loss of generality, we can assume $m_1=...=m_{e-1}=0, m_e=1$.
	This gives us the following isomorphism of noncommutative motives
	\begin{eqnarray*}
		U(\mathbb{R})^{\oplus (n-2)}\oplus U(\mathbb{C})\simeq U(\mathbb{R})\oplus U(A)\oplus\cdots\oplus U(\mathbb{R})\oplus U(A)\oplus U(\mathbb{C}).
	\end{eqnarray*}
	Now \cite{TA2S}, Proposition 4.5 implies 
	\begin{eqnarray*}
		U(\mathbb{R})^{\oplus (n-2)}\simeq U(\mathbb{R})\oplus U(A)\oplus\cdots\oplus U(\mathbb{R})\oplus U(A)
	\end{eqnarray*}
	and Theorem 4.3 shows that $A$ splits. This gives the assertion for the case $C(A,\sigma)\simeq M_s(\mathbb{C})$.\\
	\noindent
	Second case: Now let $C(A,\sigma)=A_1\times A_2$ be the product of two central simple $\mathbb{R}$-algebras. In this case the rank of the Grothendieck group $K_0({_\gamma}Q)$ must be $n$. Therefore 
	\begin{eqnarray}
	n=\sum^e_{i=1} n_i+\sum^e_{i=1} m_i.
	\end{eqnarray}
	After base change to $\mathbb{C}$, we obtain
	\begin{eqnarray}
	n=\sum^e_{i=1} n_i+2\sum^e_{i=1} m_i.
	\end{eqnarray}
	From (7) and (8) we find $0=\sum^e_{i=1} m_i$. This gives 
	\begin{eqnarray*}
		U(\mathbb{R})^{\oplus n}\simeq U(\mathbb{R})\oplus U(A)\oplus\cdots\oplus U(\mathbb{R})\oplus U(A)\oplus U(A_1)\oplus U(A_2).
	\end{eqnarray*}
	Again Theorem 4.3 implies that $A$ splits. This completes the proof.
\end{proof}
\noindent
The next result generalizes Theorem 5.6. We give the proof in detail, although it is similar to the proof of Theorem 5.6.
\begin{thm}
	Let $X={_{\gamma_1}}Q_1\times\cdots\times{_{\gamma_m}}Q_m$ be a finite product of twisted quadrics as in Theorem 5.6. Then $\mathrm{rdim}(X)=0$ if and only if $X$ splits as the product of smooth quadrics, i.e. if all $(A_i,\sigma_i)$ split. In particular, $X$ admits a full exceptional collection if and only if $X$ splits as the product of smooth quadrics. 
\end{thm}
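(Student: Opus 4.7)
The plan is to imitate the proof of Theorem~5.6, combining it with the Künneth-type argument already used in Proposition~5.1 and Corollary~5.5, so as to upgrade the single-factor statement over $\mathbb{R}$ to a finite product. The machinery in Section~4, and especially Theorems~4.3 and~4.4, will do the decisive work.

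If every $(A_i,\sigma_i)$ splits, each factor ${_{\gamma_i}}Q_i$ is a smooth quadric in $\mathbb{P}^{n_i-1}_{\mathbb{R}}$ and therefore admits a full exceptional collection by \cite{KA2} (as used in Theorem~5.6). A finite product of varieties with full exceptional collections still has one, so $X$ does, and hence $\mathrm{rdim}(X)=0$ by \cite{AB}, Lemma~1.19. This handles the easy direction and also the ``in particular'' clause in one of the two directions.

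For the converse, I would first assume $\mathrm{rdim}(X)=0$ and invoke \cite{AB1}, Proposition~6.1.6 to get a semiorthogonal decomposition $D^b(X)=\langle \mathcal{A}_1,\dots,\mathcal{A}_e\rangle$ with $\mathcal{A}_i\simeq D^b(K_i)$ and $K_i$ an étale $\mathbb{R}$-algebra, so $K_i\simeq \mathbb{R}^{\times n_i}\times \mathbb{C}^{\times m_i}$. Passing to noncommutative motives yields
\begin{equation*}
U(\mathrm{perf}_{dg}(X))\simeq U(\mathbb{R})^{\oplus \sum n_i}\oplus U(\mathbb{C})^{\oplus \sum m_i}.
\end{equation*}
On the other hand, the Kapranov-style semiorthogonal decomposition from \cite{BLU} used in Theorem~5.6 gives, for each factor, a motivic identity involving $U(\mathbb{R})$, $U(A_i)$ and $U(C(A_i,\sigma_i))$. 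Taking the Künneth product of these over $i$, and using the fact that the Tits algebras of a product of adjoint semisimple groups are Brauer-equivalent to tensor products of the Tits algebras of the factors (\cite{MPW}, Corollary~2.3), one obtains a second decomposition
\begin{equation*}
U(\mathrm{perf}_{dg}(X))\simeq \bigoplus_{J} U(B_J),
\end{equation*}
where the $B_J$ are, up to Brauer-equivalence, tensor products of the $A_i$'s and $C(A_i,\sigma_i)$'s. Comparing the two decompositions and projecting onto the $\mathrm{CSA}(\mathbb{R})^{\oplus}$-component via Theorem~4.4 reduces the identification to central simple $\mathbb{R}$-algebras. Matching ranks of $K_0$ over $\mathbb{R}$ and after base change to $\mathbb{C}$, precisely as in the two displayed equations of Theorem~5.6 (recalling that $U(\mathbb{C})$ contributes rank $1$ before and rank $2$ after base change while $U(\mathbb{R})$ contributes rank $1$ throughout), forces $\sum m_i$ to be exactly the number of $\mathbb{C}$-type Tits summands contributed by those factors whose even Clifford algebra stays central simple over $\mathbb{C}$. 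What is then left is an isomorphism $U(\mathbb{R})^{\oplus N}\simeq \bigoplus_J U(B_J)$ in $\mathrm{CSA}(\mathbb{R})^{\oplus}$, and Theorem~4.3 forces every $B_J$ to be Brauer-trivial. In particular every $A_i$ is split, so $X$ splits as a product of smooth quadrics.

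The main obstacle I expect is bookkeeping: each factor contributes two possible shapes of Tits summand, depending on whether $C(A_i,\sigma_i)$ remains central simple over $\mathbb{C}$ or decomposes as a product of two central simple $\mathbb{R}$-algebras $A_{i,1}\times A_{i,2}$, so the $B_J$ are indexed by multi-indices whose entries record these choices per factor. Balancing the $\mathbb{R}$- and $\mathbb{C}$-rank counts on both sides simultaneously in all indices, and then isolating the central simple part via Theorem~4.4 so that Theorem~4.3 applies, is the genuinely technical step; the rest is a direct transcription of the arguments of Theorem~5.6 and Corollary~5.5. Finally, the equivalence with the existence of a full exceptional collection is formal: such a collection forces $\mathrm{rdim}(X)=0$, while conversely if $X$ splits the first paragraph already provides the collection.
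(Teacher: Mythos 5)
Your proposal is correct and follows essentially the same route as the paper: the paper also takes the Kuznetsov/Künneth product of the Blunk decompositions, counts $K_0$-ranks before and after base change to $\mathbb{C}$ to determine $\sum m_i$, cancels the $U(\mathbb{C})$ summands (the paper does this via \cite{TA2}, Proposition 4.5 rather than Theorem 4.4), and then applies Theorem 4.3 to split all the algebras. The paper likewise restricts to two factors and to the case where both even Clifford algebras are central simple over $\mathbb{C}$, leaving the remaining bookkeeping cases as analogous.
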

\begin{proof}
	We give a proof only for the case $X={_{\gamma_1}}Q_1\times{_{\gamma_2}}Q_2$. Let $(A_1,\sigma_1)$ and $(A_2,\sigma_2)$ be the involution algebras associated with ${_{\gamma_1}}Q_1$ and ${_{\gamma_2}}Q_2$. Denote by $n$ the degree of $(A_1,\sigma_1)$ and by $m$ the degree of $(A_2,\sigma_2)$. We restrict ourselves to the case where $C(A_1,\sigma_1)\simeq M_s(\mathbb{C})$ and $C(A_2,\sigma_2)\simeq M_r(\mathbb{C})$, since the other cases are proved analogously. 
	Recall from \cite{BLUS} that ${_{\gamma_1}}Q_1$ and ${_{\gamma_2}}Q_2$ have semiorthogonal decompositions
	\begin{eqnarray}
	D^b({_\gamma}_1Q_1)=\langle D^b(k), D^b(A_1),...,D^b(k), D^b(A_1), D^b(C(A_1,\sigma_1))\rangle
	\end{eqnarray}
	and 
	\begin{eqnarray}
	D^b({_\gamma}_2Q_2)=\langle D^b(k), D^b(A_2),...,D^b(k), D^b(A_2), D^b(C(A_2,\sigma_2))\rangle.
	\end{eqnarray}
	From \cite{BLUS} it follows that the number of components in (9) respectively (10) is $n-1$, respectively $m-1$. According to \cite{KUZS}, the product $X={_{\gamma_1}}Q_1\times{_{\gamma_2}}Q_2$ has a semiorthogonal decomposition which is constructed by taking successive tensor products of the components of the semiorthogonal decompositions (9) and (10). Considering the case $C(A_1,\sigma_1)\simeq M_s(\mathbb{C})$ and $C(A_2,\sigma_2)\simeq M_r(\mathbb{C})$, it is an exercise to verify that 
	\begin{eqnarray}
	\mathrm{rk}K_0(X)=(n-1)(m-1)+1. 
	\end{eqnarray}
	This is also the number of components in the semiorthogonal decomposition of $D^b(X)$, obtained by taking successive tensor products of the components of (9) and (10). The number of components in the semiorthogonal decomposition of $D^b(X)$ that are equivalent to $D^b(\mathbb{C})$ is 
	\begin{eqnarray*}
		(n-1)(m-1)+1-(n-2)(m-2)=m+n-2.  
	\end{eqnarray*}
	So after base change to $\mathbb{C}$, we obtain
	\begin{eqnarray}
	\mathrm{rk}K_0(X\otimes_{\mathbb{R}}\mathbb{C})= (n-2)(m-2)+2(m+n-2). 
	\end{eqnarray}
	Assuming $\mathrm{rdim}(X)=0$, there must be a semiorthogonal decomposition 
	\begin{eqnarray}
	D^b({_\gamma}Q)=\langle \mathcal{A}_1,...,\mathcal{A}_e\rangle
	\end{eqnarray}
	with $\mathcal{A}_i\simeq D^b(K_i)$ and $K_i$ being \'etale $\mathbb{R}$-algebras. As in the proof of Proposition 5.6, we have 
	\begin{eqnarray}
	D^b(K_i)\simeq D^b(\mathbb{R})^{\times n_i}\times D^b(\mathbb{C})^{\times m_i}. 
	\end{eqnarray}
	Comparing (11), (13) and (14), we find
	\begin{eqnarray}
	(n-1)(m-1)+1=\sum^e_{i=1} n_i+\sum^e_{i=1} m_i.
	\end{eqnarray}
	From (12) and base change of the semiorthogonal decomposition (13) to $\mathbb{C}$, we get
	\begin{eqnarray}
	(n-2)(m-2)+2(m+n-2)=nm=\sum^e_{i=1} n_i+2\sum^e_{i=1} m_i.
	\end{eqnarray} 
	Taking the difference of the equation (16) with equation (15) implies $n+m-2=\sum^e_{i=1} m_i$ and therefore $nm-2n-2m+4=\sum^e_{i=1} n_i$. Now (13) gives the following isomorphism for the noncommutative motive of $\mathrm{perf}_{dg}(X)$
	\begin{eqnarray*}
		U(\mathrm{perf}_{dg}(X))\simeq U(\mathbb{R})^{\oplus (nm-2n-2m+4)}\oplus U(\mathbb{C})^{\oplus (n+m-2)}.
	\end{eqnarray*}
	The semiorthogonal decomposition of the product $X={_{\gamma_1}}Q_1\times{_{\gamma_2}}Q_2$ gives
	\begin{eqnarray*}
		U(\mathrm{perf}_{dg}(X))\simeq \big((U(\mathbb{R})\oplus U(A_1)\oplus U(A_2)\oplus U(A_1\otimes A_2)\big)^{\oplus \frac{(n-2)(m-2)}{4}}\oplus U(\mathbb{C})^{\oplus (n+m-2)}.
	\end{eqnarray*}
	Comparing both isomorphisms for $U(\mathrm{perf}_{dg}(X))$, Proposition 4.5 of \cite{TA2S} implies 
	\begin{eqnarray*}
		\big((U(\mathbb{R})\oplus U(A_1)\oplus (A_2)\oplus U(A_1\otimes A_2)\big)^{\oplus \frac{(n-2)(m-2)}{4}}\simeq  U(\mathbb{R})^{\oplus (nm-2n-2m+4)} 
	\end{eqnarray*}
	Theorem 4.3 implies that $A_1$ and $A_2$ split. This completes the proof.
\end{proof}

More generally, Theorem 4.2 has the following consequence.
\begin{prop}
	Let $G$ be a split simply connected simple algebraic group of classical type $A_n$ or $C_n$ over $k$ and $P\subset G$ a parabolic subgroup. Moreover, let $\gamma$ be a 1-cocycle, %\colon \mathrm{Gal}(k^s|k)\rightarrow G(k^s)$ be a 1-cocycle, 
	$X=G/P$ the homogeneous variety and ${_\gamma}X$ its twisted form. If ${_\gamma}X$ admits a full exceptional collection, then $\gamma$ must be the trivial 1-cocycle. In other words, non-trivial twisted flags of the considered type do not have full exceptional collections.
\end{prop}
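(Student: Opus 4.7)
The plan is to turn the existence of a full exceptional collection into a motivic identity of the form $U({_\gamma}X)\simeq U(k)^{\oplus N}$, apply Theorem 4.2 to force every relevant Tits algebra to be split, and then invoke the classification of simply connected classical groups (with $D_4$ excluded because of triality) to conclude that $\gamma$ itself is trivial.

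Suppose ${_\gamma}X$ admits a full exceptional collection $\mathcal{E}^{\bullet}_1,\ldots,\mathcal{E}^{\bullet}_N$. By Remark 3.7 this yields a semiorthogonal decomposition
\begin{equation*}
D^b({_\gamma}X)=\langle\langle\mathcal{E}^{\bullet}_1\rangle,\ldots,\langle\mathcal{E}^{\bullet}_N\rangle\rangle
\end{equation*}
in which each component $\langle\mathcal{E}^{\bullet}_i\rangle$ is equivalent to $D^b(k)$, since $\mathrm{End}(\mathcal{E}^{\bullet}_i)=k$. Applying the universal additive invariant $U\colon\mathbf{dgcat}\rightarrow\mathrm{Hmo}_0$, which transforms semiorthogonal decompositions into direct sums, gives
\begin{equation*}
U(\mathrm{perf}_{dg}({_\gamma}X))\simeq U(k)^{\oplus N}.
\end{equation*}
Combined with Theorem 4.1, which provides $U({_\gamma}X)\simeq\bigoplus^{n}_{i=1}U(A_{\chi(i),\gamma})$ for any $\mathrm{Ch}$-homogeneous basis $\rho_1,\ldots,\rho_n$ of $R(\widetilde{P})$ over $R(\widetilde{G})$, Theorem 4.2 then forces the Brauer classes $[A_{\chi(i),\gamma}]\in\mathrm{Br}(k)$ to vanish for all $i$.

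The last step is to translate triviality of these Tits algebras into triviality of $\gamma$. Here the intention is to invoke the structural results of \cite{MPW} and \cite{PA}: for a simply connected simple algebraic group $G$ of classical type $A_n$, $B_n$, $C_n$ or $D_n$ with $n\neq 4$, the cocycle $\gamma$ is determined, up to triviality, by the collection of Brauer classes of its Tits algebras $A_{\chi,\gamma}$ for $\chi\in\mathrm{Ch}$. Since the characters $\chi(i)$ attached to a $\mathrm{Ch}$-homogeneous basis span $\mathrm{Ch}$ and the Tits algebra construction is multiplicative in the character (i.e. $[A_{\chi+\chi',\gamma}]=[A_{\chi,\gamma}]+[A_{\chi',\gamma}]$ in $\mathrm{Br}(k)$), vanishing of all $[A_{\chi(i),\gamma}]$ already yields vanishing of $[A_{\chi,\gamma}]$ for every $\chi\in\mathrm{Ch}$. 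The cited classification then forces $\gamma$ to be the trivial cocycle.

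I expect the final step to be the main obstacle. The purely categorical input (Theorem 4.1 and Theorem 4.2) reduces the problem almost mechanically to the triviality of a finite list of Brauer classes; the nontrivial content lies in recognizing that, for these specific classical types, such Brauer-theoretic triviality suffices to identify the cocycle class, and in understanding why the argument genuinely breaks down for $D_4$ (triality) and needs to be handled separately there. Once that classical-groups input is accepted, the rest of the argument is a direct application of the machinery already developed in Section 4.
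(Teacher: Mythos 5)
Your argument is essentially the paper's own proof: both pass from the full exceptional collection to $U({_\gamma}X)\simeq U(k)^{\oplus N}$, combine this with the Tits-algebra decomposition of Theorem 4.1, apply Theorem 4.2 to kill all Brauer classes $[A_{\chi(i),\gamma}]$, and then invoke the classification of minimal Tits algebras for simply connected classical groups (from \cite{KNU} and \cite{MPW}, with $D_4$ excluded for triality) to conclude that $\gamma$ is trivial. Your added observation that the Tits construction is a homomorphism $\mathrm{Ch}\to\mathrm{Br}(k)$, so that vanishing on the characters $\chi(i)$ propagates to all of $\mathrm{Ch}$, is a harmless elaboration of a step the paper leaves implicit.
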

\begin{proof}
	Let $\widetilde{G}$ and $\widetilde{P}$ be the universal covers of $G$ and $P$ respectively and $R(\widetilde{G})$ and $R(\widetilde{P})$ the associated representation rings. One has the character group $\mathrm{Ch}=\mathrm{Hom}(\widetilde{Z},\mathbb{G}_m)$ which is a finite group. Now let $\rho_1,...,\rho_n$ be a $\mathrm{Ch}$-homogeneous basis of $R(\widetilde{P})$ over $R(\widetilde{G})$ and $A_{\chi(i),\gamma}$ the Tits' central simple algebras associated to $\rho_i$. It is clear that $n\geq \mathrm{ord}(\mathrm{Ch})$ (see \cite{PAS} for details on the basis $\rho_1,...,\rho_n$). Assuming the existence of a full exceptional collection in $D^b({_\gamma}X)$, Theorem 4.1 gives an isomorphism 
	\begin{eqnarray}
	\bigoplus^n_{i=1}U(A_{\chi(i),\gamma})\simeq U({_\gamma}X)\simeq U(k)\oplus...\oplus U(k)
	\end{eqnarray}
	Now Theorem 4.2 implies that the Brauer classes $[A_{\chi(i),\gamma}]$ must be trivial for all characters $\chi(i)$. % Since $n\geq \mathrm{ord}(\mathrm{Ch})$, the Tit's map $\beta_{\gamma}\colon \mathrm{Ch}\rightarrow \mathrm{Br}(k)$ must be trivial. 
	From the classification of the minimal Tits' algebras for simply connected classical groups (see \cite{KNUS}, p.378) we conclude that the 1-cocycle $\gamma$ must be trivial. This completes the proof.
\end{proof}
\begin{rema}
	\textnormal{We believe that in Proposition 5.8 we actually have if and only if.}
\end{rema}
\begin{cor}
	Let $G_1,...,G_n$ be split simply connected simple algebraic groups as in Proposition 5.8 and $\gamma_i$ %\colon \mathrm{Gal}(k^s|k)\rightarrow G_i(k^s)$, $1\leq i\leq n$, 
	1-cocycles. Given parabolic subgroups $P_i\subset G_i$, let ${_{\gamma_i}}(G_i/P_i)$ be the twisted forms of $G_i/P_i$. If $X={_{\gamma_1}}(G_1/P_1)\times...\times {_{\gamma_n}}(G_n/P_n)$ admits a full exceptional collection, then all 1-cocycles $\gamma_i$ must be trivial. 
\end{cor}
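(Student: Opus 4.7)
The plan is to apply Theorem 4.1 to the product variety $X$ and reduce the statement to Proposition 5.8 applied factor by factor. First I would observe that the universal cover of $G_1\times\cdots\times G_n$ is $\widetilde{G}_1\times\cdots\times\widetilde{G}_n$ and similarly for the parabolic, so the representation ring $R(\widetilde{P}_1\times\cdots\times\widetilde{P}_n)$ is the tensor product $R(\widetilde{P}_1)\otimes\cdots\otimes R(\widetilde{P}_n)$, and analogously for $R(\widetilde{G}_1\times\cdots\times\widetilde{G}_n)$. The relevant character group is the product $\mathrm{Ch}_1\times\cdots\times\mathrm{Ch}_n$. If $\rho_1^{(i)},\ldots,\rho_{m_i}^{(i)}$ is a $\mathrm{Ch}_i$-homogeneous basis of $R(\widetilde{P}_i)$ over $R(\widetilde{G}_i)$, then the tensor products $\rho_{j_1}^{(1)}\otimes\cdots\otimes\rho_{j_n}^{(n)}$ form a $(\mathrm{Ch}_1\times\cdots\times\mathrm{Ch}_n)$-homogeneous basis of $R(\widetilde{P}_1\times\cdots\times\widetilde{P}_n)$ over $R(\widetilde{G}_1\times\cdots\times\widetilde{G}_n)$. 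Moreover, by \cite{MPW}, Corollary~2.3, the Tits' algebra attached to the character $(\chi(j_1),\ldots,\chi(j_n))$ with respect to the product 1-cocycle $(\gamma_1,\ldots,\gamma_n)$ is Brauer-equivalent to the tensor product $A_{\chi(j_1),\gamma_1}\otimes\cdots\otimes A_{\chi(j_n),\gamma_n}$.

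Now assume $D^b(X)$ admits a full exceptional collection, so that $U(\mathrm{perf}_{dg}(X))\simeq U(k)^{\oplus N}$ for some $N$. Applying Theorem~4.1 to the product and using the description of Tits' algebras above, we obtain an isomorphism
\begin{eqnarray*}
\bigoplus_{(j_1,\ldots,j_n)}U\bigl(A_{\chi(j_1),\gamma_1}\otimes\cdots\otimes A_{\chi(j_n),\gamma_n}\bigr)\simeq U(k)^{\oplus N}.
\end{eqnarray*}
Theorem~4.2 (or the direct application of Theorem~4.3) then forces every tensor product $A_{\chi(j_1),\gamma_1}\otimes\cdots\otimes A_{\chi(j_n),\gamma_n}$ to be Brauer-trivial.

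Next I would exploit that each basis $\rho_1^{(i)},\ldots,\rho_{m_i}^{(i)}$ can be chosen to contain the class of the trivial one-dimensional representation, whose associated Tits' algebra is $k$. Fixing an index $i_0$ and a character $\chi(j_{i_0})\in\mathrm{Ch}_{i_0}$, and choosing the trivial basis element in each other factor, the corresponding tensor product collapses (up to Brauer equivalence) to $A_{\chi(j_{i_0}),\gamma_{i_0}}$. Thus every individual Tits' algebra $A_{\chi,\gamma_i}$ (for each $i$ and every character $\chi\in\mathrm{Ch}_i$) must be Brauer-trivial.

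Finally, invoking the classification of minimal Tits' algebras of simply connected classical groups of types $A_n$, $B_n$, $C_n$ and $D_n$ ($n\neq 4$) recorded in \cite{KNU}, pp.~378 and 563, exactly as in the proof of Proposition~5.8, the triviality of all Tits' algebras $A_{\chi,\gamma_i}$ forces each 1-cocycle $\gamma_i$ to be trivial. The main obstacle is conceptual rather than computational, namely justifying that the Tits' algebras for the product decompose as tensor products of those for the factors (which is exactly the content of \cite{MPW}, Corollary~2.3); granted this, the argument is a clean reduction to Proposition~5.8.
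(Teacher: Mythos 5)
Your proposal is correct and follows essentially the same route as the paper, which only sketches the argument: identify the Tits' algebras of the product with (algebras Brauer-equivalent to) tensor products of the Tits' algebras of the factors via \cite{MPW}, Corollary 2.3, apply Theorems 4.1 and 4.2 to conclude all these classes are trivial, and then invoke the classification of minimal Tits' algebras as in Proposition 5.8. Your extra step of specializing to the trivial character in all but one factor, so that triviality of the tensor products yields triviality of each individual $A_{\chi,\gamma_i}$, is a detail the paper leaves implicit but is exactly the right way to complete the reduction.
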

\begin{proof}
	We just sketch the proof as it is completely analogous to that of Proposition 5.8. Consider the classification of the minimal Tits' algebras for simply connected classical groups given in \cite{KNUS} (see also \cite{MPWS}). Then the successive tensor products of the Tits' algebras of each factor gives simple algebras that are Brauer equivalent to the Tits' algebras of $X$. From Theorem 4.2 we conclude that all these algebras must be trivial in the respective Brauer group. Hence all 1-cocycles $\gamma_i$ must be trivial. 
\end{proof}

\section{Categorical representability in dimension zero and rational points}
Propositions 5.1, 5.3, 5.4 and Theorems 5.6 and 5.7 are closely related to the existence of $k$-rational points on the considered varieties. For details and further results in this direction see \cite{AB1S}, \cite{ABS}, \cite{BBS} and \cite{MBTS}. Recall the definition of categorical representability from \cite{AB1S}.
\begin{defi}
	\textnormal{A $k$-linear triangulated category $T$ is \emph{representable in dimension $m$} if it admits a semiorthogonal decomposition $T=\langle \mathcal{A}_1,...,\mathcal{A}_r\rangle$ and for each $i=1,...,r$ there exists a smooth projective connected variety $Y_i$ with $\mathrm{dim}(Y_i)\leq m$, such that $\mathcal{A}_i$ is equivalent to an admissible subcategory of $D^b(Y_i)$.}
\end{defi}
\noindent
We will use the following notation:
\begin{eqnarray*}
	\mathrm{rdim}(T):=\mathrm{min}\{m\mid T\  \textnormal{is representable in dimension m}\},
\end{eqnarray*}
whenever such a finite $m$ exists. 
\begin{defi}
	\textnormal{Let $X$ be a smooth projective $k$-variety. One says $X$ is \emph{representable in dimension} $m$ if $D^b(X)$ is representable in dimension $m$. We will write
		\begin{eqnarray*}
			\mathrm{rdim}(X):=\mathrm{rdim}(D^b(X)), 
			\thickspace \mathrm{rcodim}(X):= \mathrm{dim}(X)-\mathrm{rdim}(X). 
	\end{eqnarray*}}
\end{defi}
Based on the idea of the proof of Proposition 5.1, we obtain:
\begin{thm}
	Let $X$ be the finite product of Brauer--Severi varieties over a field $k$. Then the following are equivalent:
	\begin{itemize}
		\item[(\textbf{i})] $\mathrm{rdim}(X)=0$.
		\item[(\textbf{ii})] $X$ admits a full exceptional collection.
		\item[(\textbf{iii})] $X$ is rational over $k$.
		\item[(\textbf{iv})] $X$ admits a $k$-rational point.
	\end{itemize} 
\end{thm}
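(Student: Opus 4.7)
The plan is to prove the cycle $(i)\Rightarrow(ii)\Rightarrow(iii)\Rightarrow(iv)\Rightarrow(ii)$ together with the trivial $(ii)\Rightarrow(i)$. All but one of these steps reduce to classical facts about Brauer--Severi varieties combined with Proposition~5.1; the main technical step is $(i)\Rightarrow(ii)$, which I will obtain via the noncommutative-motives framework of Section~4, in the same spirit as the proof of Proposition~5.1.

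\textbf{The routine implications.} A full exceptional collection yields a semiorthogonal decomposition of $D^b(X)$ into copies of $D^b(\mathrm{Spec}\,k)$, each representable in dimension zero, so $(ii)\Rightarrow(i)$. By Proposition~5.1, $(ii)$ is equivalent to $X$ being a product of projective spaces, and such a product is visibly both $k$-rational and equipped with a $k$-rational point, yielding $(ii)\Rightarrow(iii)$ and $(ii)\Rightarrow(iv)$. Conversely, if $X=Y_1\times\cdots\times Y_n$ is $k$-rational or admits a $k$-rational point, then composing a birational parametrization with each projection $X\to Y_i$ (respectively, projecting the rational point) furnishes a $k$-rational point on every Brauer--Severi factor $Y_i$; by Ch\^atelet's theorem this forces $Y_i\simeq\mathbb{P}^{n_i}_k$, so $X$ is a product of projective spaces and admits a full exceptional collection via Beilinson's construction and the K\"unneth argument of Proposition~5.1.

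\textbf{The main implication $(i)\Rightarrow(ii)$.} Assume $\mathrm{rdim}(X)=0$. Every smooth projective $k$-variety of dimension zero is the spectrum of an \'etale $k$-algebra, and by Lemma~1.19 of~\cite{AB} each admissible subcategory of $D^b$ of such a variety is again of the form $D^b(K')$ for some \'etale $k$-algebra $K'$. Hence
\[
D^b(X)=\langle D^b(K_1),\ldots,D^b(K_r)\rangle,
\]
and passing to noncommutative motives yields an isomorphism
\[
U(\mathrm{perf}_{dg}(X))\simeq\bigoplus_{i=1}^r U(K_i)
\]
lying in $\mathrm{CSep}(k)$. On the other hand, exactly as in the proof of Proposition~5.1, the full weak exceptional collections on each Brauer--Severi factor $Y_i$ combine via K\"unneth into a second semiorthogonal decomposition whose pieces are equivalent to $D^b(C_\lambda)$ for central simple $k$-algebras $C_\lambda$ Brauer-equivalent to tensor products of tensor powers of the algebras $A_1,\ldots,A_n$ attached to the factors. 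This yields
\[
U(\mathrm{perf}_{dg}(X))\simeq\bigoplus_{\lambda} U(C_\lambda)
\]
in $\mathrm{CSA}(k)^{\oplus}$. The two presentations of the same object of $\mathrm{Sep}(k)$ assemble $U(\mathrm{perf}_{dg}(X))$ into an object of the $2$-pullback $\mathrm{CSA}(k)^{\oplus}\times_{\mathrm{Sep}(k)}\mathrm{CSep}(k)$, which by Theorem~4.4 is equivalent to $\{U(k)^{\oplus n}\mid n\geq 0\}$. Hence $U(\mathrm{perf}_{dg}(X))\simeq U(k)^{\oplus d}$ for some $d$, and Theorem~4.3 applied to $\bigoplus_\lambda U(C_\lambda)\simeq U(k)^{\oplus d}$ forces every $C_\lambda$ to be Brauer-trivial. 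Specializing to the singleton index vectors then forces each $A_i$ to be split, so $X$ is a product of projective spaces and $(ii)$ holds.

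\textbf{The main obstacle.} The delicate point is the honest verification that the two isomorphisms $U(\mathrm{perf}_{dg}(X))\simeq\bigoplus U(K_i)$ and $U(\mathrm{perf}_{dg}(X))\simeq\bigoplus U(C_\lambda)$ assemble into a single object of the $2$-pullback of Theorem~4.4, rather than two unrelated isomorphisms in the ambient category $\mathrm{Sep}(k)$. Once this compatibility is granted, everything reduces to the black-box Theorems~4.3 and~4.4; without it, one only sees that an object sits (up to abstract isomorphism) in each of $\mathrm{CSep}(k)$ and $\mathrm{CSA}(k)^{\oplus}$, which is not enough on its own to conclude that it is of the form $U(k)^{\oplus d}$.
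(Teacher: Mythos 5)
Your proposal is correct and follows essentially the same route as the paper: Lang--Nishimura plus Ch\^atelet's theorem for the rational-point/rationality equivalences, Proposition 5.1 and the K\"unneth construction for the exceptional-collection statements, and for $\mathrm{rdim}(X)=0\Rightarrow$ split the comparison of the two semiorthogonal decompositions (\'etale algebras versus central simple algebras $C_{i,j}$) via Theorems 4.4 and 4.3. The ``delicate point'' you flag is exactly the step the paper handles with the phrase ``using Theorem 4.4 and the universal property of fibre products,'' so your account matches the paper's argument, merely organized as a cycle of implications rather than as three separate equivalences.
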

\begin{proof}
	The equivalence of (iii) and (iv) is an application of a Theorem of Ch\^{a}telet (see \cite{GSS}, Theorem 5.1.3) and the Lang--Nishimura Theorem. To be precise, if $X=Y_1\times\cdots\times Y_r$ is birational over $k$ to some $\mathbb{P}^n_k$, we can consider the rational map $\mathbb{P}^n_k\dashrightarrow X$ to obtain a $k$-rational point on $X$ from the Lang--Nishimura Theorem. Now assume $X$ admits a $k$ rational point. Note that we have the projections $p_i:X\rightarrow Y_i$ which, again by the Lang--Nishimura Theorem, provide us with $k$-rational points on every $Y_i$. The before mentioned Theorem of Ch\^{a}telet implies $Y_i\simeq \mathbb{P}^{m_i}_k$ for all $i\in \{1,...,r\}$. Thus $X\simeq \mathbb{P}^{m_1}_k\times...\times\mathbb{P}^{m_r}_k$. The equivalence of (ii) and (iii) is the content of Remark 5.2. 
	
	It remains to show that (i) is equivalent to (iii). So we assume $\mathrm{rdim}(X)=0$. From Proposition 5.1 it follows that $X$ splits as the direct product of projective spaces. Therefore, $X$ is $k$-rational. On the other hand, if $X$ is rational over $k$ there is a birational map $\mathbb{P}^s_k\dashrightarrow X=Y_1\times Y_2$ and by the Lang--Nishimura Theorem $X$ admits a $k$-rational point. We conclude $X\simeq \mathbb{P}^n_k\times \mathbb{P}^m_k$. Therefore $X$ has a full (strong) exceptional collection. Again, Lemma 1.20 of \cite{ABS} implies $\mathrm{rdim}(X)=0$. This completes the proof.
\end{proof}
\begin{rema}
	\textnormal{In \cite{ABS}, the statement of Theorem 6.3 is proved for the special case where $X$ is a Brauer--Severi surface. The proof in \textit{loc.cit}. however relies on the transitivity of the Braid group action on the set of full exceptional collections on $X\otimes_k k^s=\mathbb{P}^2_{k^s}$ and makes use of Galois descent.}
\end{rema}
\begin{thm}
	Let $X$ be a finite product of generalized Brauer--Severi varieties over a field $k$ of characteristic zero. Then $\mathrm{rdim}(X)=0$ if and only if $X$ splits as the finite product of Grassmannians over $k$. 
\end{thm}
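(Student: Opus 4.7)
The plan is to mirror the structure of the proof of Theorem 6.3, substituting the generalized Brauer--Severi semiorthogonal decomposition from Proposition 5.3 for the one used there. The equivalence to be proved is between $\mathrm{rdim}(X)=0$ and the splitting of all factors as Grassmannians; the heavy lifting is again done by Theorems 4.3 and 4.4.

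For the easy direction, assume $X = \mathrm{BS}(d_1,A_1)\times\cdots\times \mathrm{BS}(d_m,A_m)$ splits, i.e. each $A_i$ is split. Then each factor is a Grassmannian over a field of characteristic zero, and by Kapranov \cite{KA} each admits a full exceptional collection. The K\"unneth-type argument recalled at the beginning of Proposition \ref{BR1} and \cite{KUZ} then gives a full exceptional collection on $X$, whence $\mathrm{rdim}(X)=0$ by \cite{AB}, Lemma 1.19.

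For the nontrivial direction, suppose $\mathrm{rdim}(X)=0$. By \cite{AB}, Lemma 1.19 there is a semiorthogonal decomposition
\begin{eqnarray*}
D^b(X)=\langle D^b(K_1),\ldots, D^b(K_s)\rangle
\end{eqnarray*}
with $K_1,\ldots,K_s$ \'etale $k$-algebras. On the other hand, the proof of Proposition 5.3 (combined with Kuznetsov's construction \cite{KUZ} for the product) provides a semiorthogonal decomposition of $D^b(X)$ whose components are of the form $D^b(D_{\underline\lambda})$, where $\underline\lambda=(\lambda^{(1)},\ldots,\lambda^{(m)})$ ranges over tuples of partitions fitting inside the appropriate rectangles and $D_{\underline\lambda}$ is the central division algebra Brauer equivalent to $A_1^{\otimes|\lambda^{(1)}|}\otimes\cdots\otimes A_m^{\otimes|\lambda^{(m)}|}$. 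Passing to noncommutative motives and using that $U$ is an additive invariant, one obtains
\begin{eqnarray*}
\bigoplus_{\underline\lambda} U(D_{\underline\lambda}) \;\simeq\; U(\mathrm{perf}_{dg}(X)) \;\simeq\; \bigoplus_{i=1}^{s} U(K_i).
\end{eqnarray*}

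The key step is now to exploit that the left-hand side lies in $\mathrm{CSA}(k)^{\oplus}$ while the right-hand side lies in $\mathrm{CSep}(k)$. Applying Theorem 4.4, which identifies $\{U(k)^{\oplus n}\}$ with the $2$-pullback $\mathrm{CSA}(k)^{\oplus}\times_{\mathrm{Sep}(k)}\mathrm{CSep}(k)$, the common object must already be isomorphic to a finite sum of copies of $U(k)$. Hence
\begin{eqnarray*}
\bigoplus_{\underline\lambda} U(D_{\underline\lambda}) \;\simeq\; U(k)^{\oplus N}
\end{eqnarray*}
for some $N$, and Theorem 4.3 forces every $D_{\underline\lambda}$ to be Brauer trivial. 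Specializing to tuples $\underline\lambda$ in which exactly one $\lambda^{(i)}$ has $|\lambda^{(i)}|=1$ and the remaining are trivial, one reads off that each $A_i$ is split, so each factor of $X$ is a Grassmannian over $k$.

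The main obstacle I anticipate is the bookkeeping in the product semiorthogonal decomposition: one must check that the tensor-product of the Blunk-type full weak exceptional collections (with endomorphism algebras $D_{\lambda^{(i)}}$ Brauer equivalent to $A_i^{\otimes|\lambda^{(i)}|}$) gives precisely the collection of tensor-product central simple algebras that land on the left-hand side above, with each $D_{\underline\lambda}$ Brauer equivalent to $\bigotimes_i A_i^{\otimes|\lambda^{(i)}|}$. Once the decomposition is in place, the application of Theorems 4.3 and 4.4 proceeds exactly as in Proposition 5.3 and Theorem 6.3.
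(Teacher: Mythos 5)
Your proposal is correct and follows essentially the same route as the paper: the two semiorthogonal decompositions (one from Lemma 1.19 of \cite{AB} into \'etale pieces, one from the Blunk/K\"unneth construction into central simple algebra pieces Brauer-equivalent to $\bigotimes_i A_i^{\otimes|\lambda^{(i)}|}$), followed by Theorem 4.4 and Theorem 4.3 to force Brauer triviality of all the $D_{\underline\lambda}$ and hence of each $A_i$. The only cosmetic difference is that the paper restricts to two factors and appeals to induction, whereas you handle the general product directly.
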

\begin{proof}
	We mentioned in Proposition 5.3 that a finite product $X$ of Grassmannians over $k$ admits a full exceptional collection. Lemma 1.20 of \cite{ABS} immediately implies $\mathrm{rdim}(X)=0$.

	Now assume $\mathrm{rdim}(X)=0$. Now see the proof of Proposition 5.3 to conclude with  Proposition 4.5 that $X$ splits as the direct product of Grassmannians. Therefore, $X$ is $k$-rational. 
\end{proof}
\begin{cor}
	Let $X$ be the finite product of generalized Brauer--Severi varieties over a field of characteristic zero. Then $\mathrm{rdim}(X)=0$ if and only if it admits a full exceptional collection.
\end{cor}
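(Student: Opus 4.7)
The plan is to deduce this corollary by chaining together Theorem 6.5 and Proposition 5.3, treating it essentially as a diagram-chase of equivalent conditions. Both directions should be short once these two results are in hand.

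For the implication $(\Leftarrow)$, I would argue directly from the definition of representability. If $X$ admits a full exceptional collection $\{\mathcal{E}^{\bullet}_1,\dots,\mathcal{E}^{\bullet}_n\}$, then by Remark 3.7 it yields a semiorthogonal decomposition $D^b(X) = \langle \langle \mathcal{E}^{\bullet}_1\rangle,\dots,\langle \mathcal{E}^{\bullet}_n\rangle\rangle$ with each component $\langle \mathcal{E}^{\bullet}_i\rangle \simeq D^b(k) = D^b(\mathrm{Spec}(k))$. Since $\mathrm{Spec}(k)$ is a zero-dimensional smooth projective connected variety, Definition 6.1 applied to $D^b(X)$ yields $\mathrm{rdim}(X)=0$. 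Alternatively, one can appeal to Lemma 1.19 of \cite{AB}, which is already cited in the nearby proofs. Note that no assumption on characteristic is needed for this direction.

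For the implication $(\Rightarrow)$, I would feed the hypothesis into Theorem 6.5. Assuming $\mathrm{rdim}(X)=0$, Theorem 6.5 (which is where the characteristic-zero assumption enters, via the use of Kapranov's exceptional collection on the Grassmannian in Proposition 5.3) gives that $X$ splits as a finite product of Grassmannians $\mathrm{Grass}_k(d_i, n_i)$ over $k$. Now each such Grassmannian carries Kapranov's full exceptional collection \cite{KA}, and by the K\"unneth-type argument invoked in the proof of Proposition 5.3 (cf.\ also \cite{KUZ}), the product of varieties each admitting a full exceptional collection again admits one. Hence $X$ admits a full exceptional collection.

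There is no genuine obstacle here: the only substantive inputs are Theorem 6.5 (already established, and itself the place where the serious noncommutative-motives argument lives) and Proposition 5.3 (which encapsulates Kapranov's theorem together with the standard K\"unneth-type statement for full exceptional collections). The corollary therefore reduces to observing that the chain of equivalences
\[
\mathrm{rdim}(X)=0 \;\Longleftrightarrow\; X \text{ splits as a product of Grassmannians} \;\Longleftrightarrow\; X \text{ admits a full exceptional collection}
\]
is precisely the concatenation of these two results. The only point that requires any care is making sure the characteristic-zero hypothesis is consistently assumed (so that Kapranov's construction and the associated semi-exceptional collections on generalized Brauer--Severi varieties from \cite{BLU} are available in both directions).
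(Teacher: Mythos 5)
Your proof is correct and matches the paper's intended argument: the corollary is stated without proof precisely because it is the concatenation of Theorem 6.5 ($\mathrm{rdim}(X)=0$ iff $X$ splits as a product of Grassmannians) with Proposition 5.3 ($X$ admits a full exceptional collection iff it so splits), plus the trivial observation that a full exceptional collection gives $\mathrm{rdim}(X)=0$. Your handling of the characteristic-zero hypothesis and the Künneth step is consistent with the paper.
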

\begin{rema}
	\textnormal{Theorem 6.5 shows that if $\mathrm{rdim}(X)=0$ for a finite product $X$ of generalized Brauer--Severi varieties over a field $k$ of characteristic zero, then $X$ must be rational over $k$ and has therefore a $k$-rational point. Note that the other implication does not hold. Indeed, let $(a,b)$ be a non-split quaternion algebra over a field $k$ of characteristic zero. Consider the central simple algebra $A=M_n((a,b))$ for $n\geq 2$ and the associated generalized Brauer--Severi variety $X=\mathrm{BS}(d,A)$ with for instance $d=4$. As $\mathrm{ind}((a,b))=2$ divides $d=4$, results in \cite{BLS} show that $X$ admits a $k$-rational point. But Theorem 6.15 from below implies $\mathrm{rdim}(X)=1\neq 0$.}
\end{rema}
Recall that a central simple algebra $(A,\sigma)$ with involution is called \emph{isotropic} if $\sigma(a)\cdot a=0$ for some nonzero element $a\in A$. %it contains a non-zero isotropic ideal. A right ideal $I\subset A$ is called \emph{isotropic} if $I\subset I^{\perp}=\{x\in A| \sigma(x)y=0$ for $y\in I\}$. Note that an isotropic ideal remains isotropic after scalar extension. 
So after base change to some splitting field $L$ of $A$ the involution variety associated to an isotropic involution algebra $(A,\sigma)$ becomes a quadric $V(q)$ where $q$ is an isotropic quadratic form over $L$ (see \cite{KNUS}, p.74 Example 6.6). 
\begin{prop}
	Let ${_\gamma}Q$ be as in Proposition 5.4 and assume the associated involution algebra $(A,\sigma)$ is isotropic. Then the following are equivalent:
	\begin{itemize}
		\item[(\textbf{i})]	$\mathrm{rdim}({_\gamma}Q)=0$.
		\item[(\textbf{ii})] ${_\gamma}Q$ admits a full exceptional collection.
		\item[(\textbf{iii})] ${_\gamma}Q$ is rational over $k$.
		\item[(\textbf{iv})] ${_\gamma}Q$ admits a $k$-rational point.
	\end{itemize} 
\end{prop}
\begin{proof}
	If ${_\gamma}Q$ admits a full exceptional collection, $\mathrm{rdim}({_\gamma}Q)=0$. Now if $\mathrm{rdim}({_\gamma}Q)=0$, Proposition 5.4 implies that $A$ is split and therefore ${_\gamma}Q$ must be a smooth quadric induced by an isotropic quadratic form. Hence ${_\gamma}Q$ admits a full exceptional collection. This proves the equivalence of (i) and (ii). Assume (ii). Then Proposition 5.4 shows that $A$ is split and hence ${_\gamma}Q$ is isomorphic to a smooth isotropic quadric which has a $k$-rational point. On the other hand, if ${_\gamma}Q$ admits a $k$-rational point, Proposition 4.3 of \cite{DTS} implies that $A$ is split and so ${_\gamma}Q$ must be a smooth quadric (which admits a full exceptional collection). This proves the equivalence of (ii) and (iii). Now it is well-known that (iii) and (iv) are equivalent (see for instance \cite{COS}).
\end{proof}
\begin{cor}
	Proposition 6.8 also holds for finite products.
\end{cor}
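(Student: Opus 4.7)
The plan is to assemble the equivalences from the ingredients already in hand: Corollary 5.5 and Theorem 5.7 yield the equivalence of (i) and (ii), while (iii) and (iv) are connected to (ii) through splitting of the involution algebras $(A_i,\sigma_i)$ exactly as in the single-factor case treated in Proposition 6.8. The core observation is that everything reduces factor by factor via the projections $p_i\colon X\to X_i$ together with the Lang--Nishimura theorem, so that a rational point on $X$ forces rational points on every $X_i$, and then Proposition 4.3 of \cite{DT} triggers the splitting.

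More precisely, I would first establish (i) $\Leftrightarrow$ (ii) directly: in the trivial-discriminant case this is exactly Corollary 5.5, while over $k=\mathbb{R}$ this is Theorem 5.7. In both cases the conclusion is moreover the stronger statement that $X$ splits as a product of smooth quadrics, i.e.\ every $(A_i,\sigma_i)$ splits. I will use this stronger form as the bridge to the rationality statements.

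Next I would prove (ii) $\Rightarrow$ (iv): by the preceding paragraph, each $(A_i,\sigma_i)$ is split, so each $X_i\simeq V(q_i)\subset \mathbb{P}^{n_i-1}_k$ is a smooth quadric, and since $(A_i,\sigma_i)$ is isotropic the quadratic form $q_i$ is isotropic, hence $X_i$ has a $k$-rational point and so does $X$. For (iv) $\Rightarrow$ (iii) I would apply Lang--Nishimura to each projection $p_i$ to produce a $k$-rational point on every factor $X_i$; Proposition 4.3 of \cite{DT} then forces each $A_i$ to split, so $X_i$ is a smooth quadric with a $k$-rational point and is therefore $k$-rational by stereographic projection, making $X$ $k$-rational as a product of $k$-rational varieties. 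Finally (iii) $\Rightarrow$ (iv) is a direct application of Lang--Nishimura to a birational map $\mathbb{P}^{\dim X}_k\dashrightarrow X$.

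There is no real obstacle here; the proof is essentially a bookkeeping exercise combining Corollary 5.5/Theorem 5.7 with the factorwise argument of Proposition 6.8. The only point deserving attention is to distinguish the two hypothesis regimes (trivial discriminant over arbitrary $k$ with $\mathrm{char}(k)\neq 2$, versus arbitrary orthogonal involution over $\mathbb{R}$), so that one invokes Corollary 5.5 in the first case and Theorem 5.7 in the second, while the Lang--Nishimura step and the application of Proposition 4.3 of \cite{DT} go through identically in both.
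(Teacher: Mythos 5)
Your proof follows essentially the same route as the paper: (i)$\Leftrightarrow$(ii) via Corollary 5.5 (resp.\ Theorem 5.7), and the rational-point statements via Lang--Nishimura applied to the projections together with Proposition 4.3 of \cite{DT}. One bookkeeping point: the implications you state are (ii)$\Rightarrow$(iv), (iv)$\Rightarrow$(iii), (iii)$\Rightarrow$(iv), which as written never return to (ii); however, your (iv)$\Rightarrow$(iii) step already shows that every $A_i$ splits, so each $X_i$ is a smooth isotropic quadric and $X$ therefore admits a full exceptional collection, which closes the cycle exactly as the paper does --- just say so explicitly. Note also that Corollary 6.9 concerns only the trivial-discriminant regime of Proposition 6.8 (the real case is handled separately in Corollary 6.11), so the Theorem 5.7 branch is not needed here, though including it does no harm.
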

\begin{proof}
	Let $X=X_1\times\cdots\times X_n$ be the finite product of twisted quadrics as in Proposition 5.4. We show the equivalence of (i) and (ii). For this, assume $X$ admits a full exceptional collection. Then $\mathrm{rdim}(X)=0$. On the other hand, if $\mathrm{rdim}(X)=0$, Corollary 5.5 implies that $X$ is isomorphic to the product of smooth isotropic quadrics and hence admits a full exceptional collection. Now assume $X$ admits a full exceptional collection. Then Corollary 5.5 shows that $X$ is isomorphic to the product of smooth isotropic quadrics and so has a $k$-rational point. On the other hand, the existence of a $k$-rational point on $X$ gives us a $k$-rational point on any $X_i$ by the Lang--Nishimura Theorem. Then Proposition 4.3 of \cite{DTS} implies that any $X_i$ must be a smooth isotropic quadric. Since a smooth quadric admits a full exceptional collection, the product $X$ admits a full exceptional collection, too. This shows the equivalence of (ii) and (iii). Finally, if $X$ has a $k$-rational point, each $X_i$ has a $k$-rational point and must be a smooth isotropic quadric, which is $k$-rational. Therefore $X$ is rational. Clearly, if $X$ is $k$-rational it admits a $k$-rational point.
\end{proof}
\begin{prop}
	Let ${_\gamma}Q$ be as in Proposition 5.6 and assume the associated involution algebra $(A,\sigma)$ is isotropic. Then the following are equivalent:
	\begin{itemize}
		\item[(\textbf{i})]	$\mathrm{rdim}({_\gamma}Q)=0$.
		\item[(\textbf{ii})] ${_\gamma}Q$ admits a full exceptional collection.
		\item[(\textbf{iii})] ${_\gamma}Q$ is rational over $\mathbb{R}$.
		\item[(\textbf{iv})] ${_\gamma}Q$ admits a $\mathbb{R}$-rational point.
	\end{itemize} 
\end{prop}
\begin{proof}
	The proof is the same as for Theorem 6.3 with the difference that one uses Theorem 5.6 instead of Proposition 5.1.
	%If ${_\gamma}Q$ admits a full exceptional collection, $\mathrm{rdim}({_\gamma}Q)=0$. Now if $\mathrm{rdim}({_\gamma}Q)=0$, Proposition 5.6 implies that $A$ is split and therefore ${_\gamma}Q$ must be a quadric coming from an isotropic quadratic form. Hence ${_\gamma}Q$ admits a full exceptional collection. This proves the equivalence of (i) and (ii). Assume (ii). Then Proposition 5.6 shows $A$ is split and Proposition 4.3 of \cite{DT} implies that ${_\gamma}Q$ has a $k$-rational point. On the other hand, the existence of a $k$-rational point on ${_\gamma}Q$ implies, again by Proposition 4.3 of \cite{DT}, that $A$ is split and that therefore ${_\gamma}Q$ must be a quadric, which admits a full exceptional collection. This proves the equivalence of (ii) and (iii). Now it is well-known that (ii) and (iv) are equivalent (see for instance \cite{CO}).
\end{proof}
%\end{prop}
\begin{cor}
	Proposition 6.10 also holds for finite products.
\end{cor}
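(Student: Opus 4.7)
The plan is to establish the chain \textbf{(ii)} $\Rightarrow$ \textbf{(i)} $\Rightarrow$ \textbf{(ii)}+\textbf{(iii)} $\Rightarrow$ \textbf{(iv)} $\Rightarrow$ \textbf{(ii)} in a manner entirely parallel to Corollary 6.9, with Theorem 5.7 playing the role that Corollary 5.5 plays in the proof of Corollary 6.9. Write $X={_{\gamma_1}}Q_1\times\cdots\times{_{\gamma_n}}Q_n$ where each $X_i={_{\gamma_i}}Q_i$ is associated to an isotropic involution algebra $(A_i,\sigma_i)$ of orthogonal type over $\mathbb{R}$.

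First I would note that \textbf{(ii)} $\Rightarrow$ \textbf{(i)} is immediate: a full exceptional collection yields a semiorthogonal decomposition by copies of $D^b(\mathbb{R})$, so $\mathrm{rdim}(X)=0$. For \textbf{(i)} $\Rightarrow$ \textbf{(ii)} and \textbf{(iii)}, assume $\mathrm{rdim}(X)=0$. By Theorem 5.7 this forces every $(A_i,\sigma_i)$ to split, hence each $X_i$ becomes a smooth quadric in $\mathbb{P}^{n_i-1}_{\mathbb{R}}$. The isotropy hypothesis on $(A_i,\sigma_i)$ means that after splitting the corresponding quadratic form remains isotropic, so each split $X_i$ is a smooth isotropic quadric over $\mathbb{R}$. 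Such quadrics are rational over $\mathbb{R}$ (stereographic projection from a rational point) and admit a full exceptional collection by Kapranov \cite{KA2}; products of rational varieties are rational, and products of varieties with full exceptional collections carry full exceptional collections (K\"unneth, as in Proposition 5.1), yielding both \textbf{(ii)} and \textbf{(iii)}.

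The implication \textbf{(iii)} $\Rightarrow$ \textbf{(iv)} follows from Lang--Nishimura: a birational map $\mathbb{P}^N_{\mathbb{R}}\dashrightarrow X$ transports a rational point of projective space to one on $X$. For \textbf{(iv)} $\Rightarrow$ \textbf{(ii)}, if $X$ has an $\mathbb{R}$-rational point then by Lang--Nishimura applied to each projection $X\to X_i$, every factor $X_i$ has an $\mathbb{R}$-rational point. Proposition 4.3 of \cite{DT} then forces each $(A_i,\sigma_i)$ to split, so each $X_i$ is a smooth quadric in $\mathbb{P}^{n_i-1}_{\mathbb{R}}$, admitting a full exceptional collection, and the K\"unneth argument gives a full exceptional collection on the product.

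There is no real obstacle here; the argument is a direct transposition of Corollary 6.9 to the real setting, and the only point requiring care is that the isotropy hypothesis on $(A_i,\sigma_i)$ is exactly what guarantees the split factor is an isotropic quadric (hence $\mathbb{R}$-rational with an $\mathbb{R}$-rational point), which is what allows the implications through \textbf{(iii)} and \textbf{(iv)} to close up the cycle.
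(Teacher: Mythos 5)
Your proof is correct and follows exactly the route the paper intends: the paper states Corollary 6.11 without proof precisely because it is the verbatim transposition of the proof of Corollary 6.9 to the real case, with Theorem 5.7 replacing Corollary 5.5 and Proposition 6.10 replacing Proposition 6.8, which is what you carry out. The only point needing care --- that isotropy of $(A_i,\sigma_i)$ guarantees the split factors are isotropic quadrics, so that rationality and the existence of rational points close the cycle --- is the same point handled in the proof of Corollary 6.9, and you handle it correctly.
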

\begin{prop}
	Let $X=X_1\times\cdots\times X_n$ be a finite product over $k$ ($\mathrm{char}(k)\neq 2$) of dimension at least two. Assume $X_i$ is either a Brauer--Severi variety or a twisted form of a quadric as in Proposition 5.4. If $X$ is $k$-rational, then $\mathrm{rcodim}(X)\geq 2$. 
\end{prop}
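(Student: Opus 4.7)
The plan is to establish the stronger conclusion $\mathrm{rdim}(X)=0$, after which the bound $\mathrm{rcodim}(X)=\dim(X)-\mathrm{rdim}(X)\geq 2$ follows immediately from the assumption $\dim(X)\geq 2$. Equivalently, I will show that the rationality hypothesis already forces every factor $X_i$ to be split, reducing $X$ to a product of projective spaces and smooth split quadrics.

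First, since $X$ is $k$-rational there is a dominant rational map $\mathbb{P}^{N}_k\dashrightarrow X$, and the Lang--Nishimura theorem produces a $k$-rational point on $X$. Composing with the projections $X\to X_i$ (which are defined everywhere), each factor $X_i$ inherits a $k$-rational point. Now I analyze the factors according to their type. For a Brauer--Severi factor $X_i$, Ch\^atelet's theorem (\cite{GS}, Theorem 5.1.3) yields $X_i\simeq \mathbb{P}^{m_i}_k$. For a twisted quadric factor $X_i$ associated to an involution algebra $(A_i,\sigma_i)$ of orthogonal type with trivial discriminant, the existence of a $k$-rational point means that $X_i$ is isotropic; applying Proposition 4.3 of \cite{DT} (exactly as in the arguments for Propositions 6.8 and 6.10), the algebra $(A_i,\sigma_i)$ splits, so $X_i$ is a smooth quadric in $\mathbb{P}^{n_i-1}_k$.

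Consequently, $X$ is isomorphic to a product of projective spaces and smooth split quadrics. Each such factor admits a full exceptional collection by Beilinson \cite{BE} and Kapranov \cite{KA2} respectively, and the K\"unneth-type argument used in the proof of Proposition 5.1 assembles these into a full exceptional collection on $X$. Lemma 1.19 of \cite{AB} then gives $\mathrm{rdim}(X)=0$, whence $\mathrm{rcodim}(X)=\dim(X)\geq 2$, as required.

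The proof is essentially an assembly of results already developed in the paper, so there is no serious technical obstacle. The only point that genuinely needs to be imported from outside is that a $k$-rational point on a twisted quadric with trivial discriminant forces the associated involution algebra to split; this is exactly Proposition 4.3 of \cite{DT}, the same input used in Corollaries 6.9 and 6.11, and without it the reduction to a product of split varieties would fail.
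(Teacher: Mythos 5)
Your proof is correct and follows essentially the same route as the paper: Lang--Nishimura to get rational points on each factor, Ch\^atelet (resp.\ Tao's Proposition 4.3) to split each factor, then assemble a full exceptional collection on the product and conclude $\mathrm{rdim}(X)=0$. The only cosmetic difference is that the paper cites Theorem 6.3 and Proposition 6.8 directly where you unfold them into their underlying ingredients; in fact your direct appeal to \cite{DT}, Proposition 4.3 sidesteps the isotropy hypothesis formally present in Proposition 6.8, which is a minor tidiness gain.
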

\begin{proof}
	If $X$ is $k$-rational, it admits a $k$-rational point. By Lang--Nishimura Theorem, any of the factors $X_i$ admits a $k$-rational point. If $X_i$ is a Brauer--Severi variety, Theorem 6.3 implies that $X_i$ has a full exceptional collection. If $X_i$ is a twisted form of a smooth quadric (as in Proposition 5.4), Proposition 6.8 shows that $X_i$ has a full exceptional collection. Therefore, the product $X$ admits a full exceptional collection and hence $\mathrm{rdim}(X)=0$. As $\mathrm{dim}(X)\geq 2$, we conclude $\mathrm{rcodim}(X)\geq 2$.
\end{proof}

\begin{prop}
	Let $X=X_1\times\cdots\times X_n$ be a finite product over $\mathbb{R}$ of dimension at least two. Assume $X_i$ is either a Brauer--Severi variety or a twisted form of a quadric as in Proposition 5.6. If $X$ is $k$-rational, then $\mathrm{rcodim}(X)\geq 2$. 
\end{prop}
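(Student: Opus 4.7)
The plan is to mirror the argument of Proposition 6.12 almost verbatim, replacing the appeal to Proposition 6.8 (valid for arbitrary fields of characteristic different from two) by an appeal to Proposition 6.10 (its $\mathbb{R}$-analogue). The key observation is that $\mathbb{R}$-rationality of $X$ forces an $\mathbb{R}$-rational point on every factor, which in turn forces each factor to split well enough to admit a full exceptional collection.

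First, if $X$ is $\mathbb{R}$-rational, then by the Lang--Nishimura theorem applied to a rational map $\mathbb{P}^{\dim X}_{\mathbb{R}}\dashrightarrow X$, the variety $X$ admits an $\mathbb{R}$-rational point. Composing with the projections $p_i\colon X\rightarrow X_i$ and applying Lang--Nishimura once more to each $p_i$, we conclude that every factor $X_i$ admits an $\mathbb{R}$-rational point.

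Next I would split into cases according to the type of the factor. If $X_i$ is a Brauer--Severi variety, then the existence of an $\mathbb{R}$-rational point together with Theorem 6.3 shows that $X_i$ admits a full exceptional collection (in fact $X_i\simeq \mathbb{P}^{m_i}_{\mathbb{R}}$ by Ch\^{a}telet's theorem). If $X_i$ is a twisted form of a quadric as in Proposition 5.6, then the existence of an $\mathbb{R}$-rational point implies by Proposition 4.3 of \cite{DT} that the associated involution algebra $(A_i,\sigma_i)$ is isotropic, and then Proposition 6.10 yields that $X_i$ admits a full exceptional collection.

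Finally, since every factor $X_i$ admits a full exceptional collection, the product $X=X_1\times\cdots\times X_n$ admits a full exceptional collection by the K\"unneth-type argument already used at the beginning of the proof of Proposition \ref{BR1}. Hence $\mathrm{rdim}(X)=0$ by \cite{AB}, Lemma 1.19, and since $\dim(X)\geq 2$ by assumption, we conclude $\mathrm{rcodim}(X)=\dim(X)-\mathrm{rdim}(X)\geq 2$. There is no real obstacle here; the only subtlety is to notice that the hypothesis of Proposition 6.10 (isotropy of the involution algebra) is automatically supplied by the presence of the $\mathbb{R}$-rational point on each quadric factor, so that the argument of Proposition 6.12 goes through unchanged over $\mathbb{R}$.
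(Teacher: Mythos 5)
Your proof is correct and is precisely the argument the paper intends: Proposition 6.13 is stated without proof as the $\mathbb{R}$-analogue of Proposition 6.12, and you reproduce that proof verbatim, substituting Proposition 6.10 (with the isotropy hypothesis supplied by Tao's Proposition 4.3 once Lang--Nishimura gives a rational point on each factor) for Proposition 6.8. No gaps.
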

%\begin{prop}
%	Let $X=\mathrm{BS}(d_1,A_1)\times\cdots\times \mathrm{BS}(d_n,A_n)$ be a finite product of generalized Brauer--Severi varieties over a field $k$ of chracteristic zero. Assume $\mathrm{ind}(A_i)\leq 3$. If $X$ is $k$-rational, then $\mathrm{rcodim}(X)\geq 2$. 	
%\end{prop}
%\begin{proof}

%\end{proof}

Finally, we want to calculate $\mathrm{rdim}(X)$ for generalized Brauer--Severi and certain involution varieties in the case $A$ is non-split. We first generalize \cite{NO3S}, Proposition 4.1.  
\begin{prop}
	Let $X=\mathrm{BS}(r,A)$ be a generalized Brauer--Severi variety over a field $k$ of characteristic zero. Then $\mathrm{rdim}(X)\leq \mathrm{ind}(A)-1$. In particular $\mathrm{mcd}(X)\leq \mathrm{ind}(A)-1$.	
\end{prop}
\begin{proof}
	Let $a=(a_1,...,a_n)\in \mathbb{N}^r$, subject to the condition $n\geq a_1\geq\cdots \geq a_1\geq 0$ (that is Young diagrams with at most $n-r$ rows and at most $r$ colums). By \cite{BLUS} we have a semiorthogonal decomposition
	\begin{eqnarray}
	D^b(X)=\langle \mathcal{A}_1, \mathcal{A}_2,..., \mathcal{A}_s\rangle,
	\end{eqnarray}
	with components $\mathcal{A}_i$ being equivalent to $D^b(A^{\otimes d(a)})$, where $d(a)=a_1+\cdots +a_n$. According to the Wedderburn Theorem, the central simple algebra $A$ is isomorphic to $M_q(B)$ for a unique central division algebra $B$ and some $q>0$. %Hence $\mathrm{deg}(A)=n\cdot\mathrm{ind}(A)$ and $\mathrm{dim}(X)=\mathrm{deg}(A)-1$. 
	Now let $Y_B$ be the Brauer--Severi variety corresponding to $B$. As $B$ is a division algebra, we have $\mathrm{dim}(Y_B)=\mathrm{ind}(A)-1=\mathrm{ind}(B)-1$. Note that $D^b(A^{\otimes d(a)})\simeq D^b(B^{\otimes d(a)})$. Since $D^b(B^{\otimes i})$ is an admissible subcategory of $D^b(Y_B)$ for any $i\geq 0$ (see \cite{BERS}), we immediately conclude 
	\begin{eqnarray*}
		\mathrm{rdim}(X)\leq \mathrm{dim}(Y_B)=\mathrm{ind}(X)-1.
	\end{eqnarray*}	
\end{proof}
\noindent
The next result generalizes \cite{NO3S}, Theorem 1.4.
\begin{thm}
	Let $X=\mathrm{BS}(r,A)$ be a generalized Brauer--Severi variety over a field $k$ of characteristic zero with $\mathrm{ind}(A)\leq 3$. Then $\mathrm{rdim}(X)=\mathrm{ind}(A)-1$. In particular $\mathrm{mcd}(X)\leq \mathrm{ind}(A)-1$.		
\end{thm}
\begin{proof}
	According to Theorem 6.5 a generalized Brauer--Severi variety $X$ is split if and only if $\mathrm{rdim}(X)=0$. It is a general fact that $X$ splits if and only if $\mathrm{ind}(A)=1$. Therefore,  $\mathrm{rdim}(X)=\mathrm{ind}(A)-1=0$. This covers the case $\mathrm{ind}(A)=1$. So it remains to prove the assertion for $r:=\mathrm{ind}(A)\in\{2,3\}$. 
	%The central simple algebra $A$ is isomorphic to $M_n(B)$ for a unique central division algebra $B$ and some $n>0$. As $r\in\{2,3\}$, the index of $B$ must be $2$ or $3$. %Since $2$ and $3$ are primes and index of $A$ is divided by the period of $A$ (see 2.1), we find $\mathrm{per}(A)=\mathrm{ind}(A)$. Note that $\mathrm{per}(A)=1$ is excluded as $X$ is assumed to be non-split. Since $r\in \{2,3\}$ we have $(\mathrm{per}(A)-1)\in\{1,2\}$. We start with the case $\mathrm{per}(A)-1=1$. 
	By Proposition 6.14 one has $\mathrm{rdim}(X)\leq \mathrm{ind}(X)-1=r-1$. Assume by contradiction that $\mathrm{rdim}(X)< r-1$. So for $r=2$ this means $\mathrm{rdim}(X)=0$ which gives a contradiction, as $A$ is by assumption non-split. For $n=3$, $\mathrm{rdim}(X)< r-1$ means $\mathrm{rdim}(X)\leq 1$. But $\mathrm{rdim}(X)=0$ gives a contradiction, since $X$ is non-split. Below we prove that $\mathrm{rdim}(X)=1$ also gives a contradiction. 
	
	From \cite{AB1S}, Proposition 6.1.6 and 6.1.10 we conclude that if $\mathrm{rdim}(X)=1$ there must be a semiorthogonal decomposition of $D^b(X)$ whose components are either $D^b(K)$, where $K/k$ is a (finite) separable extension, or $D^b(D)$, where $D$ is a central division algebra with $\mathrm{ind}(D)\in\{1,2\}$, or $D^b(C)$, where $C$ is a smooth $k$-curve of positive genus. Note that $D^b(C)$ cannot be present because $K_0(X)$ is torsion free. Using the described semiorthogonal decomposition (that we get assuming $\mathrm{rdim}(X)=1$), we see that the noncommutative motive $U(\mathrm{perf}_{dg}(X))$ decomposes as
	\begin{eqnarray}
	U(\mathrm{perf}_{dg}(X))&\simeq&\bigoplus^n_{i=1}U(K_i)\oplus \left(\bigoplus^m_{j=1}U(D_j)\right)\\
	&\simeq& \bigoplus_a U(A^{\otimes d(a)})
	\end{eqnarray}
	for suitable central division algebras $D_j$ with $\mathrm{ind}(D_j)\in\{1,2\}$ and suitable separable extensions $K_i$.
	Note that $K_0(X)\simeq \mathbb{Z}^{\oplus s}$ and therefore $n+m=s$. After base change to $\bar{k}$ we also have $K_0(X_{\bar{k}})\simeq \mathbb{Z}^{\oplus s}$. %Let us denote the base change of the components $\mathcal{A}_i$ by $\bar{\mathcal{A}}_i$. 
	Now for any $D^b(K_i)$ we obtain after base change $D^b(K_i)_{\bar{k}}\simeq \prod^{r_i}_{q=1}D^b(\bar{k})$.
	%The base change of (2) to $k^s$ gives for the components $\mathcal{A}_i$ for which $\mathcal{A}_i\simeq D^b(K_i)$, $\bar{\mathcal{A}}_i\simeq \prod^{r_i}_{q=1}D^b(k^s)$.
	Hence $\sum^n_{i=1}{r_i}+m=s$. But this implies $\sum^n_{i=1}{r_i}=n$ and therefore $r_i=1$. The above isomorphisms (19) and (20) then give
	\begin{eqnarray*}
		\bigoplus^n_{i=1}U(k)\oplus \left(\bigoplus^m_{j=1}U(D_j)\right) \simeq \bigoplus_a U(A^{\otimes d(a)})
	\end{eqnarray*} 
	Then by Theorem 4.3 we conclude that $A$ must be split or that $A$ must be Brauer-equivalent to $D_j$ for some $j$. This contradicts $r=3$ and completes the proof.
\end{proof}
\noindent
For involution varieties, we have:
\begin{thm}
	Let $X$ be one of the following varieties:
	\begin{itemize}
		\item[(\textbf{i})] a twisted quadric associated to a central simple algebra $(A,\sigma)$ with involution of orthogonal type having trivial discriminant $\delta(A,\sigma)$. 
		\item[(\textbf{ii})] a twisted quadric associated to a central simple $\mathbb{R}$-algebra $(A,\sigma)$ with involution of orthogonal type.
	\end{itemize} 
	Then $\mathrm{rdim}(X)\leq \mathrm{ind}(A)-1$. Moreover, if $\mathrm{ind}(A)\leq 3$, then $\mathrm{rdim}(X)= \mathrm{ind}(A)-1$. In particular $\mathrm{mcd}(X)\leq \mathrm{ind}(A)-1$.
\end{thm}
\begin{proof}
	Note that we have a semiorthogonal decomposition (see \cite{BLUS}) 
	\begin{eqnarray}
	D^b(X)=\langle D^b(k),D^b(A),...,D^b(k),D^b(A),D^b(C(A,\sigma))\rangle. 
	\end{eqnarray}	
	Let us first consider the case where $X$ is a twisted quadric associated to a central simple $\mathbb{R}$-algebra $(A,\sigma)$ with involution of orthogonal type. In this case, $A\simeq M_m(\mathbb{H})$ and $C(A,\sigma)$ is either a central simple $\mathbb{C}$-algebra or the product of two central simple $\mathbb{R}$-algebras. If $C(A,\sigma)\simeq M_n(\mathbb{C})$ take $Y=\mathrm{Spec}(\mathbb{C})$. Then we see that $D^b(C(A,\sigma))$ can be embedded into $D^b(Y)$ as an admissible subcategory. If $C(A,\sigma)\simeq A_1\times A_2$, we have $D^b(C(A,\sigma))=D^b(A_1)\times D^b(A_2)$. So there is a semiorthogonal decomposition
	\begin{eqnarray*}
		D^b(X)=\langle D^b(k),D^b(A),...,D^b(k),D^b(A),D^b(A_1), D^b(A_2)\rangle. 
	\end{eqnarray*}
	Since $A_1$ and $A_2$ are central simple $\mathbb{R}$-algebras, it is easy to check that there are smooth projective connected varieties $Y_1$ and $Y_2$ of dimension at most one such that $D^b(A_i)$ is embedded into $D^b(Y_i)$ as an admissible subcategory. And since $D^b(M_m(\mathbb{H}))$ is admissible in $D^b(Z)$, where $Z$ is the Brauer--Severi curve associated to $\mathbb{H}$, we conclude $\mathrm{rdim}(X)\leq 1=\mathrm{ind}(A)-1$. Now let $X$ be a twisted quadric associated to a central simple algebra $(A,\sigma)$ with involution of orthogonal type having trivial discriminant $\delta(A,\sigma)$. In this case, $C(A,\sigma)$ is the product $A_1\times A_2$ of two central simple $k$-algebras. So there is a semiorthogonal decomposition
	\begin{eqnarray*}
		D^b(X)=\langle D^b(k),D^b(A),...,D^b(k),D^b(A),D^b(A_1), D^b(A_2)\rangle. 
	\end{eqnarray*}
	After base change to some splitting field $L$ of $A$, this semiorthogonal decomposition gives rise to the full exceptional collection from Kapranov \cite{KA2S} on the smooth quadric $X\otimes_k L$.

	This in particular implies that $D^b(A_1)\otimes_k L\simeq D^b(L)$. The same for $A_2$. According to \cite{ABS}, Corollary 1.19 this implies that $A_1$ and $A_2$ are split by $L$. Now take $L$ to be a separable splitting field of degree $\mathrm{ind}(A)$. Let $Z_i$ be the Brauer--Severi variety associated to $D_i$, where $D_i$ is the central division algebra for which $A_i\simeq M_{l_i}(D_i)$ according to the Wedderburn theorem. Now we know that $D^b(A_i)$ can be embedded as an admissible subcategory into $D^b(Z_i)$. And since $\mathrm{ind}(A_i)\leq \mathrm{ind}(A)$, we obtain $\mathrm{dim}(Z_i)\leq \mathrm{ind}(A)-1$ for $i=1,2$. This gives $\mathrm{rdim}(X)\leq \mathrm{ind}(A)-1$. For the second statement of our theorem, we first consider the split case. So if $A$ is split, i.e. if $\mathrm{ind}(A)=1$, we have $\mathrm{rdim}(X)=0$ according to Proposition 6.10 and Proposition 5.4. Therefore, $\mathrm{rdim}(X)=0=\mathrm{ind}(A)-1$. If $\mathrm{ind}(A)=2$, the central simple algebra is non-split. So Propositions 5.4 and 6.10 imply $1\leq \mathrm{rdim}(X)$. On the other hand, the first statement of our theorem gives $\mathrm{rdim}(X)\leq \mathrm{ind}(A)-1=1$. Hence $\mathrm{rdim}(X)=0=\mathrm{ind}(A)-1$. Finally, consider the case $\mathrm{ind}(A)=3$. Then $\mathrm{rdim}(X)\leq \mathrm{ind}(A)-1=2$. In what follows, we exclude $\mathrm{rdim}(X)\in\{0,1\}$. Since $A$ is non-split, we immediately have $\mathrm{rdim}(X)\neq 0$. Now assume $\mathrm{rdim}(X)=1$. From \cite{AB1S}, Proposition 6.1.6 and 6.1.10 we conclude that if $\mathrm{rdim}(X)=1$ there must be a semiorthogonal decomposition of $D^b(X)$ whose components are either $D^b(K)$, where $K/k$ is a (finite) separable extension, or $D^b(D)$, where $D$ is a central division algebra with $\mathrm{ind}(D)\in\{1,2\}$, or $D^b(C)$, where $C$ is a smooth $k$-curve of positive genus. Note that $D^b(C)$ cannot be present because $K_0(X)$ is torsion free. Using the described semiorthogonal decomposition (that we get assuming $\mathrm{rdim}(X)=1$), we see that the noncommutative motive $U(\mathrm{perf}_{dg}(X))$ decomposes as
	\begin{eqnarray*}
		U(\mathrm{perf}_{dg}(X))\simeq\bigoplus^n_{i=1}U(K_i)\oplus \left(\bigoplus^m_{j=1}U(D_j)\right)
	\end{eqnarray*}
	for suitable central division algebras $D_j$ with $\mathrm{ind}(D_j)\in\{1,2\}$ and suitable separable extensions $K_i$.
	Note that $K_0(X)\simeq \mathbb{Z}^{\oplus r}$ and therefore $n+m=r$. After base change to $\bar{k}$ we also have $K_0(X_{\bar{k}})\simeq \mathbb{Z}^{\oplus r}$. %Let us denote the base change of the components $\mathcal{A}_i$ by $\bar{\mathcal{A}}_i$. 
	Now for any $D^b(K_i)$ we obtain after base change $D^b(K_i)_{\bar{k}}\simeq \prod^{r_i}_{q=1}D^b(\bar{k})$.
	%The base change of (2) to $k^s$ gives for the components $\mathcal{A}_i$ for which $\mathcal{A}_i\simeq D^b(K_i)$, $\bar{\mathcal{A}}_i\simeq \prod^{r_i}_{q=1}D^b(k^s)$.
	Hence $\sum^n_{i=1}{r_i}+m=r$. But this implies $\sum^n_{i=1}{r_i}=n$ and therefore $r_i=1$. Note that the semiorthogonal decomposition (21) forces the noncommutative motive to decompose as
	\begin{eqnarray*}
		U(\mathrm{perf}_{dg}(X))\simeq \bigoplus_{s}U(k)\oplus \bigoplus_{t}U(A)\oplus U(C(A,\sigma)). 
	\end{eqnarray*} 
	Since we assumed $\mathrm{ind}(A)=3$, $C(A,\sigma)$ is the product of two central simple algebras $A_1$ and $A_2$ (note that case (ii) cannot appear since in this case index is two). But then we have
	\begin{eqnarray*}
		\bigoplus^n_{i=1}U(k)\oplus \left(\bigoplus^m_{j=1}U(D_j)\right) \simeq \bigoplus_{s}U(k)\oplus \bigoplus_{t}U(A)\oplus U(A_1)\oplus U(A_2))
	\end{eqnarray*} 
	Then by Theorem 4.3 we conclude that $A$ must be split or that $A$ must be Brauer-equivalent to $D_j$ for some $j$. This contradicts $\mathrm{ind}(A)=3$. This completes the proof.
\end{proof}

%Summarizing the previews results we obtain:
%\begin{thm}

%\end{thm}

%\begin{proof}
%We proof the statement only for the case $X={_{\gamma_1}}Q_1\times {_{\gamma_2}}Q_2$ as the general case follows easily from induction. So assume $X$ admits a full exceptional collection. From Corollary 5.5 we obtain that the corresponding isotropic involution algebras $(A_1,\sigma_1)$ and $(A_2,\sigma_2)$ must be split, i.e ${_{\gamma_1}}Q_1$ and ${_{\gamma_2}}Q_2$ are smooth isotropic quadrics. Proposition 4.3 of \cite{DT} implies that both ${_{\gamma_1}}Q_1$ and ${_{\gamma_2}}Q_2$ have a $k$-rational point. Hence ${_{\gamma_1}}Q_1$ and ${_{\gamma_2}}Q_2$ are rational over $k$ (see \cite{CO}). This means that $X$ is rational over $k$. 

%Now assume $X$ is rational over $k$. Then $X$ admits a $k$-rational point. Applying the Lang--Nishimura Theorem to the projections $X\rightarrow {_{\gamma_1}}Q_1$ and $X\rightarrow {_{\gamma_2}}Q_2$ provides us with $k$-rational points on ${_{\gamma_1}}Q_1$ and ${_{\gamma_2}}Q_2$. This implies that ${_{\gamma_1}}Q_1$ and ${_{\gamma_2}}Q_2$ are smooth isotropic quadrics. But as mentioned earlier, the product of two smooth quadrics admits a full exceptional collection. This proves the equivalence of (i) and (ii). The equivalence of (ii) and (iii) is left to the reader.  
%\end{proof}
\addcontentsline{toc}{section}{References}

{\small MATHEMATISCHES INSTITUT, HEINRICH--HEINE--UNIVERSIT\"AT 40225 D\"USSELDORF, GERMANY}\\
E-mail adress: novakovic@math.uni-duesseldorf.de

\end{document}